\theoremstyle{definition}
\newtheorem{theorem}{Theorem}[section]
\newtheorem{proposition}[theorem]{Proposition}
\theoremstyle{definition}
\theoremstyle{definition}
\DeclareMathOperator{\im}{im}
\def\Z{\mathbb{Z}}
\def\S{\mathbb{S}}
\begin{document}
\baselineskip=17pt
\title[]{Finitistic Spaces with the orbit space $\mathbb{FP}^n\times \mathbb{S}^m$}
\author[Anju Kumari and Hemant Kumar Singh]{ Anju Kumari and Hemant Kumar Singh}
\address{\textbf{Anju Kumari}, Department of Mathematics, University of Delhi,	Delhi -- 110007, India.}
\email{anjukumari0702@gmail.com}
\address{\textbf{Hemant Kumar Singh}, 
	Department of Mathematics, University of Delhi 
	Delhi -- 110007, India.}
\email{hemantksingh@maths.du.ac.in}

\date{}
\thanks{This paper is supported by the Science and Engineering Research Board (Department of Science and Technology, Government of India) with reference number- EMR/2017/002192}
\begin{abstract} 
Let $G=\mathbb{S}^d$, $d=0,1$ or $3$,  act freely on a finitistic connected space $X$. This  paper gives 
the cohomology classification of $X$ if a mod 2 or rational cohomology of the orbit space $X/G$ is isomorphic to the  product of a projective space and  sphere $\mathbb{FP}^n\times \S^m$, where $\mathbb{F}=\mathbb{R},\mathbb{C}$ or $\mathbb{H}$, respectively.  For a free involution on $X$, a lower bound of  covering dimension of the coincidence  set of a continuous map $f:X\to \mathbb{R}^k$ is also determined.

\end{abstract}
\subjclass[2010]{Primary 55T10; Secondary 57S99 }

\keywords{Free action; Finitistic space; Leray-Serre spectral sequence; Gysin sequence; Euler class}

\maketitle
\section {Introduction}
Let $G$ be a  group acting  on a compact Hausdorff  space $X$. It has been an interesting problem to classify the orbit space $X/G$ for a given space  $X$ when $G$ acts freely on $X$. It is well known that the projective spaces $\mathbb{FP}^n$, where $\mathbb{F}=\mathbb{R}$, $\mathbb{C}$ or $\mathbb{H}$,  are the orbit spaces of the standard  free actions of $G=\mathbb{S}^d$ on  $\mathbb{S}^{(d+1)n+d}$, where $d=0,1$ and $3$, and the lens space $L_p^{2n+1}$ is the orbit space of the standard free action of $G=\mathbb{Z}_p$, $p$ an odd prime, on $\mathbb{S}^{2n+1}$. 
A lot of work have been done in the literature in this direction for various spaces $X$ and different groups $G$. For example: (i) $X=L_p^{2n+1}$ or $\mathbb{FP}^n$ or $\mathbb{FP}^n\times \mathbb{FP}^m$, where $\mathbb{F}=\mathbb{R}$ or $\mathbb{C}$  and $G=\Z_2$
\cite{Hemant2008, Mahender2010, Mahender2013}, (ii) $X=\mathbb{F}H_{r,s}$, $\mathbb{F}=\mathbb{R}$ or $\mathbb{C}$ and $0\leq s\leq r$, (the Real or complex Milnor manifold) and $G=\Z_2$ or $\mathbb{S}^1$ \cite{Pinka Dey}, (iii) $X=\S^n\times\S^m, 1\leq n\leq m$ and $G=\Z_p, p$ a prime, or $\S^1$ or $\S^3$ \cite{Dotzel, Anju1}, and (iv) $X=P(1,n)$ (Dold manifold) and $G=\Z_2$ \cite{Morita}. 
  Another thread of research  is to classify a space $X$ for a given orbit space $X/G$, where $G$ acts   on  $X$. For semi-free actions of $G=\S^1$ with the orbit space $X/G$ is  a   
  simply connected 4-manifold, it has been proved that the total space is a  simply connected 5-manifold \cite{Harvey}.  Duan \cite{Duan 2005} shows that every 1-connected 4-manifold $M$ is the quotient of some  regular circle action on the space ${\#}_{r-1}\mathbb{S}^2\times \mathbb{S}^3$, where $r=\text{rank }H^2(M,\Z)$. If $G$ acts freely on $X$, then the cohomology classification of $X$ is discussed for (i) $X/G=L_p^{2n+1}$ or $\mathbb{CP}^n$ and $G=\Z_p, p$ a prime or $\mathbb{S}^1$ \cite{Su1963}, and for $X/G=\mathbb{HP}^n$ and $G=\S^3$ \cite{Anju 2}.   We contribute to this question by determining  the  cohomology classification of a finitistic space $X$ equipped with  free action of  $G=\Z_2,\mathbb{S}^1$ or $\mathbb{S}^3$ and the 
  orbit space  $X/G$ is a mod $2$ or rational cohomology  $\mathbb{FP}^n\times \mathbb{S}^m,$ where $\mathbb{F}=\mathbb{R},\mathbb{C}$ or $\mathbb{H}$, respectively.  These results describes   the converse of some results for $G=\Z_2$ or $\S^1$ actions proved by Dotzel et. al. \cite{Dotzel}, and  for $G=\mathbb{S}^3$ actions proved in \cite{Anju1}.   
  
  For free $\Z_2$-space $X$ and any space  $Y$, the   coincidence set of a continuous map  $f:X\to Y$  is defined as 
  $A(f)=\{x\in X|f(x)=f(gx) \text{ for each } g\in \Z_2\}$.
  The classical Borsuk-Ulam Theorem states that for  a continuous map $f:\mathbb{S}^n\to \mathbb{R}^k$, the coincidence set $A(f)=\{x\in \mathbb{S}^n|f(x)=f(-x)\}$ is nonempty, where $\S^n$ is equipped with antipodal action. 
  Munkholm \cite{Munkholm} shows that for a map $f:\S^n\to M$, topological dimension of coincidence set $A(f)$ is greater than or equal to $n-k$, where  $M$ is a compact $k$-dimensional topological manifold, $n>k$. Biasi et al. \cite{Biasi} extends this result for generalized manifolds. In this paper, we determine a  lower bound of the covering dimension of  coincidence set $A(f)$ of a continuous map  $f:X\to \mathbb{R}^k$  if $H^*(X/G;\Z_2)=\Z_2[a,b]/\langle a^{n+1},b^2\rangle$, where $\deg a= 1$ and $\deg b=m$.

\section{Preliminaries}  
Let $G$ be a compact Lie group and $G\to E_G\to B_G$ be the universal principal $G$-bundle, where $B_G$ is the classifying space. Suppose $G$  acts freely on a finitistic space $X$. The associated bundle $X\hookrightarrow (X\times E_G)/{G}\to B_G$ is a fibre bundle with fibre $X$. Put $X_G=(X\times E_G)/{G}$. The bundle $X\hookrightarrow X_G\to B_G$ is called the Borel fibration. Then there exists the Leray-Serre spectral sequence for the Borel fibration  $X\stackrel{i}{\hookrightarrow} X_G \stackrel{\pi}{\rightarrow} B_G$ which converges to $H^*(X_G)$ as an algebra with $E_2^{k,l}=H^k(B_G;\mathcal{H}^l(X;R))$ . If $B_G$ is simply connected then the system of local coefficients on $B_G$ is simple then the $E_2$-term  becomes
\begin{equation*}
	E_2^{k,l}= H^k(B_G;R)\otimes H^l(X;R).
\end{equation*}
We recall some results which are needed to prove our results:
\begin{proposition}\label{5p2}
	Let $X\stackrel{i}{\hookrightarrow} X_G \stackrel{\pi}{\rightarrow} B_G$ be the Borel fibration. Suppose that the system of local coefficients on $B_G$ is simple, then the edge homomorphisms
	\begin{center}
			$ H^k(B_G)\cong E_2^{k,0} \longrightarrow E_3^{k,0}\longrightarrow \cdots 		
			  \longrightarrow E_k^{k,0} \longrightarrow E_{k+1}^{k,0}=E_{\infty}^{k,0}\subset H^k(X_G), \text{ and }$ \\
$  H^l(X_G) \longrightarrow E_{\infty}^{0,l}= E_{l+1}^{0,l} \subset E_{l}^{0,l} \subset \cdots \subset E_2^{0,l}\cong  H^l(X) $
	\end{center}
	are the homomorphisms $$\pi^*: H^k(B_G) \to H^k(X_G) ~ ~ ~ \textrm{and} ~ ~ ~ i^*: H^l(X_G)  \to H^l(X).$$
\end{proposition}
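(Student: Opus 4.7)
The plan is to combine the filtration underlying the Leray--Serre spectral sequence with naturality of that spectral sequence under morphisms of fibrations. I would first unpack the filtration: convergence of the spectral sequence gives a decreasing filtration $H^n(X_G) = F^0 H^n \supset F^1 H^n \supset \cdots \supset F^n H^n \supset 0$ with $E_\infty^{p,q} = F^p H^{p+q}/F^{p+1} H^{p+q}$. Hence $E_\infty^{n,0} = F^n H^n(X_G)$ embeds as a subspace of $H^n(X_G)$ and $E_\infty^{0,l} = H^l(X_G)/F^1 H^l(X_G)$ appears as a quotient. Because $d_r$ raises $p$ by $r$ and lowers $q$ by $r-1$, position $(n,0)$ receives no incoming differential for $r \geq 2$ (the source would sit in row $q < 0$), so each $E_r^{n,0} \twoheadrightarrow E_{r+1}^{n,0}$ is a surjection, producing $E_2^{n,0} \twoheadrightarrow E_\infty^{n,0}$. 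Dually, $(0,l)$ is not the source of a nonzero differential in a first-quadrant spectral sequence, so each $E_{r+1}^{0,l} \hookrightarrow E_r^{0,l}$ is an injection, giving $E_\infty^{0,l} \hookrightarrow E_2^{0,l}$.

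Next I would identify these edge maps with $\pi^*$ and $i^*$ using naturality. For the bottom-row edge I would compare the Borel fibration with the trivial fibration $\mathrm{pt} \hookrightarrow B_G \to B_G$ via the map of fibrations whose base map is the identity, whose total-space map is $\pi$, and whose fiber map is constant. The induced morphism of spectral sequences is the identity on $H^k(B_G)$ along the bottom row at the $E_2$-page. Since the spectral sequence of the trivial fibration is concentrated on the line $q=0$ with $H^n(B_G)$ living in filtration level $F^n$, naturality places $\pi^*(H^n(B_G)) \subset F^n H^n(X_G) = E_\infty^{n,0}$ and identifies the composite $H^n(B_G) = E_2^{n,0} \twoheadrightarrow E_\infty^{n,0} \hookrightarrow H^n(X_G)$ with $\pi^*$.

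For the left-column edge I would compare our fibration with $X \to X \to \mathrm{pt}$ via the map of fibrations whose fiber map is the identity, whose total-space map is $i$, and whose base map is the inclusion of a basepoint of $B_G$. The induced morphism of spectral sequences is the identity on the left column at the $E_2$-page; since the spectral sequence of a fibration over a point is concentrated on $p=0$, naturality forces $i^*$ to vanish on $F^1 H^l(X_G)$ and identifies $H^l(X_G) \twoheadrightarrow E_\infty^{0,l} \hookrightarrow E_2^{0,l} = H^l(X)$ with $i^*$.

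The main obstacle is the bookkeeping needed to verify that $\pi^*$ of a base class really does land in the lowest filtration step $F^n$ and that $i^*$ annihilates $F^1$. Both inclusions are forced by the fact that maps of fibrations preserve the filtration, but making this rigorous requires committing to a particular model of the spectral sequence --- for instance, the one built from the CW filtration of $B_G$ pulled back to $X_G$ along $\pi$. In that model, cocycles in $\pi^*(C^*(B_G))$ sit in the top filtration degree by construction, and cocycles that vanish on the fiber over a basepoint sit in $F^1$ by construction, so once the model is fixed the two identifications are immediate from naturality.
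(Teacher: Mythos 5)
The paper never proves this proposition; it is recalled as a standard fact with a pointer to McCleary, so the relevant comparison is with the textbook argument, and your outline follows that argument: extract the bottom-row and left-column edge maps from the filtration of $H^*(X_G)$, then identify them with $\pi^*$ and $i^*$ by comparing with the fibrations $\mathrm{pt}\hookrightarrow B_G\to B_G$ and $X\to X\to \mathrm{pt}$, using that a map of fibrations induces a filtration-preserving map and a morphism of spectral sequences. That part is correct, including the key points that $\pi^*(H^k(B_G))\subseteq F^kH^k(X_G)=E_\infty^{k,0}$ and that $i^*$ annihilates $F^1H^l(X_G)$ because $F^1$ vanishes for the fibration over a point; one should only add that the bottom-row comparison at $E_2$ is an isomorphism because the fibre $X$ is connected, as it is throughout the paper.

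However, your justification of the directions of the edge maps is stated backwards and, as written, is false. At position $(n,0)$ the incoming differentials $d_r\colon E_r^{n-r,r-1}\to E_r^{n,0}$ are perfectly possible (their sources lie in row $q=r-1\geq 1$, not $q<0$); what vanishes is the outgoing differential $d_r\colon E_r^{n,0}\to E_r^{n+r,1-r}$, whose target lies in a negative row, and that is why each $E_{r+1}^{n,0}$ is a quotient of $E_r^{n,0}$. Dually, $(0,l)$ can very well be the source of nonzero differentials $d_r\colon E_r^{0,l}\to E_r^{r,l-r+1}$; what vanishes is every incoming differential, whose source would sit in column $p=-r<0$, and that is why each $E_{r+1}^{0,l}$ is a subgroup of $E_r^{0,l}$. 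If your stated vanishing claims were literally true, they would produce injections along the bottom row and surjections down the left column, i.e.\ the opposite arrows to those in the statement, and would make the edge homomorphisms injective resp.\ surjective in general, which they are not: the incoming differentials into $(n,0)$ and the outgoing differentials out of $(0,l)$ are exactly the obstructions. Swap the two parentheticals and the rest of your argument is the standard correct proof.
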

For details about spectral sequences, we refer the reader to \cite{McCleary}. Let $h:X_G\rightarrow X/G$ be the map induced by the $G$-equivariant projection $X\times E_G\rightarrow X$.  Then h is a homotopy equivalence \cite{Dieck}.   All the cohomologies are  \v{C}ech cohomology  with coefficients in $R$, where $R=\mathbb{Q}$ or $\mathbb{Z}_2$. Note that $X\sim_R Y$ means $H^*(X;R)\cong H^*(Y;R)$. For $R=\mathbb{Q}$ and for $G=\mathbb{S}^1$ or $\mathbb{S}^3$, we assume that the associated sphere bundle $G\hookrightarrow X\to X/G$ is orientable.
\begin{proposition}(\cite{Hemant2012,JKaur2015})\label{S1 and S3 pre}
	Let $G=\S^1$ or $\S^3$, act freely on a  finitistic space $X$.  If $H^i(X;R)=0$ for all $i>n$, then $H^i(X/G;R)=0$ for all $i> n$.
\end{proposition}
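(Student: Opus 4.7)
The plan is to exploit the sphere-bundle structure of the orbit map. Since $G$ acts freely, $\pi:X\to X/G$ is a principal $G$-bundle, hence a fibre bundle with fibre $\S^{d}$ (where $d=1$ or $3$). By the orientability convention in force (automatic for $R=\mathbb{Z}_{2}$, assumed for $R=\mathbb{Q}$), this bundle admits a Gysin sequence over $R$:
\begin{equation*}
\cdots\to H^{k-d-1}(X/G)\xrightarrow{\smile e}H^{k}(X/G)\xrightarrow{\pi^{*}}H^{k}(X)\xrightarrow{\pi_{!}}H^{k-d}(X/G)\xrightarrow{\smile e}H^{k+1}(X/G)\to\cdots,
\end{equation*}
with Euler class $e\in H^{d+1}(X/G;R)$.

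Plugging in the hypothesis $H^{j}(X;R)=0$ for $j>n$: for each $k>n$, exactness at $H^{k}(X/G)$ forces $\smile e:H^{k-d-1}(X/G)\to H^{k}(X/G)$ to be surjective, while exactness at $H^{k-d}(X/G)$ forces $\smile e:H^{k-d}(X/G)\to H^{k+1}(X/G)$ to be injective. After reindexing I obtain the isomorphism $\smile e:H^{j}(X/G;R)\xrightarrow{\cong}H^{j+d+1}(X/G;R)$ for every $j>n$; so the cohomology of $X/G$ is $(d+1)$-periodic above degree $n$.

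Now I argue by contradiction: suppose $H^{m}(X/G;R)\ne 0$ for some $m>n$. Iterating the isomorphism above gives $H^{m+k(d+1)}(X/G;R)\ne 0$ for every $k\ge 0$, so $X/G$ has nonzero $R$-cohomology in arbitrarily high degrees. To rule this out I would use the finitistic hypothesis: $X$ being finitistic together with $H^{i}(X;R)=0$ for $i>n$ forces the sheaf-theoretic cohomological dimension $\dim_{R}X$ to be finite, and then, because $G$ is a compact Lie group acting freely with finite-dimensional fibre $\S^{d}$, the Leray spectral sequence of $\pi$ with arbitrary coefficient sheaves transfers this bound to $X/G$. Hence $H^{j}(X/G;R)=0$ for all sufficiently large $j$, contradicting the previous sentence.

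The main obstacle is this last step: turning the finitistic hypothesis on $X$ into finite cohomological dimension of $X/G$ strong enough to cut off the periodicity. The Gysin bookkeeping is routine, but without this finiteness input the periodicity alone is consistent with infinitely many nonzero degrees, so it is precisely here that the assumption \emph{finitistic} (rather than merely a bound on $H^{*}(X;R)$) is used.
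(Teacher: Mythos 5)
The Gysin bookkeeping in your first two paragraphs is fine: exactness does give that cup product with the Euler class is an isomorphism $H^{j}(X/G;R)\to H^{j+d+1}(X/G;R)$ for all $j>n$, so a single nonzero group above degree $n$ would propagate to arbitrarily high degrees. (Note the paper does not prove this proposition at all; it quotes it from \cite{Hemant2012,JKaur2015}, and this periodicity step is indeed the standard opening move there.) The genuine gap is exactly at the point you flag as the crux: the claim that ``$X$ finitistic together with $H^{i}(X;R)=0$ for $i>n$ forces the sheaf-theoretic dimension $\dim_{R}X$ to be finite'' is false. Cohomological dimension is governed by cohomology with arbitrary sheaf coefficients, equivalently by the cohomology of closed subspaces, not by the constant-coefficient cohomology of $X$ alone. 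For a counterexample, take the cone over $Y=\prod_{k\geq 1}\S^{k}$: it is compact, hence finitistic, and being contractible it has the \v{C}ech cohomology of a point, yet it contains $Y$ as a closed subspace and, by continuity of \v{C}ech cohomology, $H^{k}(Y;R)\neq 0$ in arbitrarily high degrees, so $\dim_{R}$ of the cone is infinite. (The paper's own example $\prod_{n\geq 1}\S^{n}\times\mathbb{R}^{k}$ already warns that finitistic spaces carry no dimension bound.) Without a finite bound on $\dim_{R}(X/G)$ the periodicity is never cut off, and some genuine input beyond the Gysin sequence is unavoidable, as the non-finitistic example $\S^{\infty}\to\CP^{\infty}$ shows; so as written the proof does not close.

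Two further remarks. First, even granting $\dim_{R}X<\infty$, your transfer step points the wrong way: the Leray spectral sequence of $\pi\colon X\to X/G$ computes $H^{*}(X)$ from sheaves on $X/G$, so it moves dimension bounds from the base to the total space; to bound $\dim_{R}(X/G)$ by $\dim_{R}(X)$ one would instead use local triviality (or local cross-sections) of the principal bundle — this part is repairable, unlike the finiteness lemma. Second, in the sources the paper cites, the finitistic hypothesis is not converted into finite-dimensionality of $X$ or $X/G$; rather, the non-vanishing in arbitrarily high degrees (equivalently, non-nilpotence of the Euler class, i.e.\ injectivity of $\pi^{*}\colon H^{*}(B_{G};R)\to H^{*}(X_G;R)\cong H^{*}(X/G;R)$ in all degrees) is ruled out for free actions on finitistic spaces by Swan--Bredon type theorems, where ``finitistic'' does its work through a \v{C}ech-theoretic argument with finite-dimensional refinements of covers. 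Any correct completion of your argument needs an input of that kind rather than a dimension bound deduced from the vanishing of $H^{i}(X;R)$ for $i>n$.
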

Now, we recall Gysin sequence of sphere bundles.
\begin{proposition}
	Let  $G=\mathbb{S}^d$, $d=0,1$ or $3$, act freely on a finitistic space $X$. The Gysin sequence of the sphere bundle $G\hookrightarrow X\stackrel{p}{\to} X/G$ is: \begin{align*}
		\cdots\rightarrow H^{i}(X/G) \stackrel{p^*_{i}}{\longrightarrow}H^{i}(X)\stackrel{\rho_{i}}{\longrightarrow}H^{i-d}(X/G)\stackrel{\cup}{\longrightarrow} H^{i+1}(X/G)\stackrel{p_{i+1}^*}{\longrightarrow}H^{i+1}(X)
		\rightarrow\cdots
	\end{align*}
	which  start with 
	\begin{align*}
		0\longrightarrow &H^{d}(X/G)\stackrel{p^*_d}{\longrightarrow}H^{d}(X)\stackrel{\rho_{d}}{\longrightarrow}H^0(X/G)\stackrel{\cup}{\longrightarrow}H^{d+1}(X/G)\stackrel{p^*_{d+1}}{\longrightarrow}H^{d+1}(X)\longrightarrow\cdots 	\end{align*}
 	where $\cup:H^i(X/G)\to H^{i+d+1}(X/G)$  maps $x\to x\cup u$ and $u\in H^{d+1}(X/G)$ denotes the Euler class of the  sphere  bundle.  It is easy to observe that $p^*_i$ is an isomorphism for $0\leq i\leq d-1$ for $d=1$ or 2. 
\end{proposition}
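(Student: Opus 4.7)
The plan is to derive the Gysin sequence from the Leray--Serre spectral sequence associated with the sphere bundle $\mathbb{S}^d\hookrightarrow X\stackrel{p}{\to} X/G$. Because $G$ acts freely on the finitistic space $X$, this is indeed a fibre bundle, and the hypotheses on coefficients (taking $R=\mathbb{Z}_2$ when $d=0$, and assuming orientability of the sphere bundle when $R=\mathbb{Q}$ and $d=1,3$) guarantee that the system of local coefficients is simple. Thus the $E_2$-page takes the clean tensor form
\begin{equation*}
E_2^{k,l}=H^k(X/G;R)\otimes H^l(\mathbb{S}^d;R).
\end{equation*}

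The key structural observation is that $H^l(\mathbb{S}^d;R)$ vanishes except for $l=0$ and $l=d$, so the $E_2$-page has only two nonzero rows. Consequently all differentials $d_r$ with $r\neq d+1$ vanish for dimensional reasons, and the spectral sequence collapses at $E_{d+2}$. The only possibly nontrivial differential is
\begin{equation*}
d_{d+1}:E_{d+1}^{k,d}=H^k(X/G;R)\longrightarrow E_{d+1}^{k+d+1,0}=H^{k+d+1}(X/G;R),
\end{equation*}
which I would identify with cup product by a distinguished class $u=d_{d+1}(1\otimes \iota)\in H^{d+1}(X/G;R)$, where $\iota$ generates $H^d(\mathbb{S}^d;R)$; this class $u$ is by definition the Euler class of the sphere bundle. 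The multiplicative structure of the spectral sequence together with the Leibniz rule then forces $d_{d+1}(x\otimes \iota)=x\cup u$.

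Next, I would read off the filtration of $H^i(X;R)$ from $E_\infty$. There are only two contributing terms, giving a short exact sequence
\begin{equation*}
0\longrightarrow E_\infty^{i,0}\longrightarrow H^i(X;R)\longrightarrow E_\infty^{i-d,d}\longrightarrow 0,
\end{equation*}
where $E_\infty^{i,0}=\ker(\cup u:H^i(X/G)\to H^{i+d+1}(X/G))$ and $E_\infty^{i-d,d}=\operatorname{coker}(\cup u:H^{i-d-1}(X/G)\to H^i(X/G))$. By Proposition~\ref{5p2}, the inclusion of the kernel corresponds to the pullback $p^*_i$, and the quotient map corresponds to the edge homomorphism $\rho_i$. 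Splicing these short exact sequences together through the connecting map $\cup u$ yields exactly the claimed Gysin sequence, and the initial segment follows because $E_2^{k,l}=0$ for $l<0$, forcing $p^*_d$ to be injective.

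The main obstacle here is the identification of the transgression differential $d_{d+1}$ with multiplication by the Euler class, since this is what gives the sequence its geometric content; once this is accepted (as is standard, cf.\ \cite{McCleary}), everything else is bookkeeping on the two-row $E_2$-page. The final remark about $p^*_i$ being an isomorphism for $0\le i\le d-1$ when $d=1$ or $3$ is then immediate, since for those degrees the source and target of $\rho_i$ and of $\cup u$ both vanish, forcing $p^*_i$ to be an isomorphism.
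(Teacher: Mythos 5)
The paper states this proposition as a recalled standard fact and supplies no proof, so your argument has to stand on its own; for $d=1$ and $d=3$ your spectral-sequence route is the standard one and is essentially sound, but there are two problems. The genuine gap is the case $d=0$. There the fibre $\mathbb{S}^0$ is disconnected, so the Leray--Serre $E_2$-page does not have ``two nonzero rows'': it has the single row $l=0$ with coefficients in the local system $\mathcal{H}^0(\mathbb{S}^0;\Z_2)\cong\Z_2[\Z_2]$, on which $\pi_1(X/G)$ acts by permuting the two points of the fibre. Taking $R=\Z_2$ does not make this system simple (the stalk is two-dimensional and the deck transformation swaps the basis; the system is nontrivial whenever the double cover is nontrivial, e.g.\ whenever $X$ is connected), so your appeal to simple coefficients, the two-row collapse, and the transgression defining $u\in H^{d+1}(X/G)$ all break down for $d=0$. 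The $\Z_2$-Gysin (Smith--Gysin) sequence of a double cover is true but needs a different derivation: for instance from the exact sequence of coefficient sheaves $0\to\Z_2\to p_*\Z_2\to\Z_2\to 0$ on $X/G$, whose long exact \v{C}ech cohomology sequence is the Gysin sequence with connecting map cup product with the class $u\in H^1(X/G;\Z_2)$ classifying the cover, or equivalently from the Thom--Gysin sequence of the associated line bundle $X\times_{\Z_2}\mathbb{R}$ via the mod $2$ Thom isomorphism; both arguments are valid for a finitistic (paracompact) base.

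Second, in the cases you do treat there is a bookkeeping error: the differential runs $d_{d+1}\colon E_{d+1}^{k,d}\to E_{d+1}^{k+d+1,0}$, from the top row to the bottom row, so at $E_\infty$ the bottom-row term is a cokernel and the top-row term is a kernel, namely $E_\infty^{i,0}=\operatorname{coker}\bigl(\cup u\colon H^{i-d-1}(X/G)\to H^{i}(X/G)\bigr)$ and $E_\infty^{i-d,d}=\ker\bigl(\cup u\colon H^{i-d}(X/G)\to H^{i+1}(X/G)\bigr)$ --- the opposite of what you wrote; with this corrected, the splicing of the short exact sequences gives exactly the stated sequence. Also note that Proposition \ref{5p2} is stated for the Borel fibration $X\hookrightarrow X_G\to B_G$; what you actually need is the same edge-homomorphism statement for the bundle $\mathbb{S}^d\hookrightarrow X\to X/G$ (the base edge map is $p^*$), while $\rho_i$ is just the projection of $H^i(X)$ onto its top filtration quotient $E_\infty^{i-d,d}\subseteq H^{i-d}(X/G)$, which coincides with the fibre edge map only when $i=d$. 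Your justification of the final remark (that $p^*_i$ is an isomorphism for $0\le i\le d-1$, the groups $H^{i-d}(X/G)$ and $H^{i-d-1}(X/G)$ being zero) is fine.
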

In this paper, we have considered finitistic spaces. These spaces were introduced by R. G. Swan \cite{Swan} and have been discovered as relevant spaces for the study of cohomological aspects of tranformation groups \cite{bredon}. 
Recall that a space  is said to be finitistic if it is paracompact, Hausdorff and every open cover of it have a finite dimensional open refinement. All compact spaces, all finite dimensional and
all finite-dimensional paracompact spaces are some examples of  finitistic spaces. Note that the space $X=\prod\limits_{n=1}^{\infty} \mathbb{S}^n\times\mathbb{R}^k$ is an example of  finitistic space which is neither compact nor has finite covering dimension. Recall that $H^*(\mathbb{RP}^n\times \mathbb{S}^m;R)=R[a,b]/\langle a^{n+1},b^2  \rangle$, where $\deg a=1$ and $\deg b=m$.

\section{Main Theorems} 

Let $G=\mathbb{S}^d$, $d=0,1$ or $3$, act freely on a  finitistic space $X$ having the mod $2$ or rational cohomology the  product of spheres $\mathbb{S}^{(d+1)n+d}\times \mathbb{S}^m$. It has been proved that   one of the possibilities  of the orbit space $X/G$  are the mod $2$ or rational cohomology  $\mathbb{FP}^n\times\mathbb{S}^m$, where $\mathbb{F}=\mathbb{R},\mathbb{C}$ or $\mathbb{H}$, respectively.                                             
Using techniques of the Gysin sequence of sphere bundles and the Leray-Serre spectral sequence of Borel fibration, 
 we discuss converse of these statements. These results also describes the cohomology classification of a finitistic connected free $G$-space $X$  with the orbit space a product of projective space  and sphere. First, we have discussed  for free actions of $G=\S^3$.        
\begin{theorem}\label{Thoerem S3}
	Let $G=\mathbb{S}^3$ act freely on a finitistic connected space $X$  with $X/G\sim_R\mathbb{HP}^n\times \mathbb{S}^m$, where $R$ is $\mathbb{Q}$ or $\Z_2$. Then the cohomology algebra of  $X$ with coefficients in $R$ is isomorphic to the cohomology algebra of one of the following: 
	\begin{enumerate}
		\item[(i)] $\mathbb{S}^m\times\mathbb{S}^{4n+3}$; 
		\item[(ii)]  $\mathbb{S}^3\times\mathbb{S}^{m}\times\mathbb{HP}^n$;
		
		\item[(iii)] $\mathbb{HP}^n\times\mathbb{S}^7$ and $m=4$;
		\item[(iv)]$R[x,y]/ \langle x^{n+1},y^4 \rangle$,  where $\deg x=4$, $\deg y=3$,  $m=6$,   $\beta(y)=x$ and $R=\Z_2$,  where $\beta:H^3(X/G)\to H^4(X/G)$ denotes the  Bockstein homomorphism  associated with the coefficient sequence $0\to \Z_2\to Z_4\to \Z_2\to 0$.
		
	\end{enumerate}

\end{theorem}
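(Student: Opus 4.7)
The plan is to extract $H^*(X;R)$ from $H^*(X/G;R) = R[a,b]/\langle a^{n+1},b^2\rangle$ (with $|a|=4$, $|b|=m$) by running the Gysin sequence of the sphere bundle $\S^3 \hookrightarrow X \stackrel{p}{\to} X/G$, driven entirely by the Euler class $u \in H^4(X/G;R)$. First I would enumerate the possibilities for $u$: when $m \neq 4$, $H^4(X/G) = R\cdot a$, so up to units $u$ is either $0$ or $a$; when $m = 4$, the additional options $u = \mu b$ and $u = \lambda a + \mu b$ become available.

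In each case I read off $H^i(X)$ from the split short exact sequence
\[
0 \to \operatorname{coker}\bigl(\cup u \colon H^{i-4}(X/G) \to H^i(X/G)\bigr) \to H^i(X) \to \ker\bigl(\cup u \colon H^{i-3}(X/G) \to H^{i+1}(X/G)\bigr) \to 0,
\]
which splits as $R$-modules since $R$ is a field. When $u$ is a nonzero multiple of $a$, $\cup u$ is injective except at the top $a^n$-class, so both cokernel and kernel are $2$-dimensional and $H^*(X)$ is concentrated in degrees $0,m,4n+3,4n+m+3$, matching $\S^m\times\S^{4n+3}$ (case (i)). When $m=4$ and $u$ carries a nonzero $b$-component, I would compute $\cup u$ on the basis $\{a^j, a^{j-1}b\}$ of $H^{4j}(X/G)$ to see that the maps collapse in the $b$-direction; the surviving cohomology is $H^*(\mathbb{HP}^n \times \S^7)$, i.e.\ case (iii), and the mixed subcase $\lambda,\mu\neq 0$ reduces to case (i) via the same matrix computation. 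When $u=0$, the sequences split trivially and $H^*(X) \cong H^*(X/G) \oplus H^{*-3}(X/G)$ as $R$-modules.

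The final step is to upgrade this additive decomposition in the $u=0$ case to a ring isomorphism. Choose $y \in H^3(X)$ with $\rho(y) = 1$. Over $R=\mathbb{Q}$, graded commutativity forces $y^2=0$, so the $H^*(X/G)$-module structure already recovers $H^*(\S^3\times\S^m\times\mathbb{HP}^n;\mathbb{Q})$, case (ii). Over $R=\Z_2$ one must locate $y^2 \in H^6(X;\Z_2)$: for $m \neq 6$, the module structure combined with the Steenrod identity $y^2 = \operatorname{Sq}^3 y$ forces $y^2 = 0$ and returns case (ii), whereas for $m=6$ the class $y^2$ may coincide with $p^*(b)$, and together with the Bockstein relation $\beta(y) = p^*(a)$ (from $0\to\Z_2\to\Z_4\to\Z_2\to 0$) it presents $H^*(X;\Z_2)$ as $R[x,y]/\langle x^{n+1},y^4\rangle$, case (iv). As a consistency check, the Leray-Serre spectral sequence of the Borel fibration $X \hookrightarrow X_G \to B_{\S^3}$, with $E_2 = R[t]\otimes H^*(X;R)$ ($|t|=4$) abutting to $H^*(X/G;R)$, must be compatible with each of the four listed ring structures and rules out any further possibility.

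The main obstacle is this last discrimination: since the Gysin sequence only determines $H^*(X;R)$ as a graded module, separating case (ii) from case (iv) in the $R=\Z_2$, $m=6$ branch requires an independent computation of the cup square $y^2$, most naturally via Steenrod operations and the Bockstein. A secondary bookkeeping nuisance is the $m=4$ branch, where one must verify that every Euler class $u=\lambda a + \mu b$ lands cleanly in case (i) or case (iii) and produces no spurious new family.
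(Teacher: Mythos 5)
Your skeleton (enumerating the Euler class in the Gysin sequence, reading off the additive structure, then using Steenrod squares and the Bockstein to separate (ii) from (iv)) is the same as the paper's, and your idea of getting products such as $x^iy\neq 0$, $x^iy\,p^*(b)\neq 0$ in the trivial--Euler-class case from the $H^*(X/G)$-module structure of the Gysin sequence (projection formula for $\rho$) is a legitimate, if unstated, substitute for the paper's spectral-sequence argument. The genuine gap is that the theorem classifies the cohomology \emph{algebra}, and in all the nonzero--Euler-class cases your argument stops at the additive level. When the Euler class is a nonzero multiple of $a$ you still have to show that the classes $y\in H^m(X)$ and $u_{4n+3}\in H^{4n+3}(X)$ multiply to a generator of $H^{m+4n+3}(X)$ and that $u_{4n+3}^2=0$; the paper does this by running the Leray--Serre spectral sequence of the Borel fibration, ruling out a transgression of $u_{4n+3}$ at page $4(n-i_0+1)$ when $m=4i_0\le 4n$ (it would force a ring structure on $H^*(X_G)$ incompatible with $H^*(\mathbb{HP}^n\times\S^m)$), so that the transgression occurs at page $4n+4$ and $u_{m+4n+3}=\alpha\, y\,u_{4n+3}$; and the residual possibility $u_{4n+3}^2\neq 0$ when $m=8n+6$ is excluded by \cite[Theorem 4L.9]{Hatcher} over $\Z_2$ and by graded commutativity over $\mathbb{Q}$. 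The same multiplicative work is needed in your $m=4$ branches: in the $u=cb$ case one must prove $x^iu_7\neq 0$ (the paper uses $d_8(1\otimes u_7)\neq 0$), and in the mixed case $\lambda a+\mu b$ the identification with possibility (i) again requires the spectral-sequence step, not just the matrix computation of $\cup u$. Your closing ``consistency check'' with the Borel fibration is precisely where this argument has to live; it cannot be deferred as a check.

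A second gap is in the trivial--Euler-class case over $\Z_2$: the claim that for $m\neq 6$ the identity $y^2=Sq^3y$ forces $y^2=0$ is only automatic when $H^6(X)=0$, i.e.\ for $m\notin\{2,3,6\}$. For $m=3$ one has $H^6(X)=\langle y\,p^*(b)\rangle$ and the Adem relation does not obviously kill $y^2$, since $Sq^1y$ can be $p^*(a)$ and $Sq^2$ of that class need not vanish in a space that is only a cohomology $\mathbb{HP}^n\times\S^3$; the paper disposes of this case by observing that if $y^2$ were a nonzero multiple of $v_6=y\,p^*(b)$ then $d_4(1\otimes v_6)=d_4\bigl((1\otimes y)^2\bigr)=0$, so $v_6$ would survive in the Borel spectral sequence, contradicting $H^6(X_G)\cong H^6(X/G)=0$. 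The case $m=2$, where $H^6(X)=\langle p^*(ab)\rangle$, is likewise not covered by your Steenrod identity and needs a separate argument. Finally, the relation $\beta(y)=p^*(a)$ in possibility (iv) is part of the statement and has to be proved, as the paper does from $Sq^3=Sq^2Sq^1+Sq^1Sq^2$ together with $H^5(X)=0$; it is not merely a normalization one may record in the presentation.
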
   
\begin{proof}
It is clear that $H^i(X/G)\cong H^i(X)$ for  $i=0,1$ and 2, and $H^i(X)=0$ for all $i>m+4n+3$.  We consider  different possibilities  of the Euler class in the Gysin sequence:

\noindent\textbf{Case(a):} When $\cup:H^0(X/G)\to H^4(X/G)$ is trivial. 

First, assume that $m>4n$.
In this case, $\cup:H^{i}(X/G)\to H^{i+4}(X/G)$ is trivial for all $ i\geq 0$.  For $0\leq i< n$ and $k=0,m$, we get $\rho_{k+4i+3}$ and $p^*_{k+4i+4}$ are isomorphisms.   This implies that  $H^{k+4i+3}(X)\cong H^{k+4i+4}(X)\cong R$. Suppose that $\{u_{4i+3}\}$ and $\{x^{i+1}\}$ denotes bases for   $H^{4i+3}(X)$ and $H^{4i+4}(X)$,   respectively, where $\rho_{4i+3}(u_{4i+3})=a^i$ and $p^*_{4}(a)=x$. Note that  $H^i(X)=0$ for   $i\not = 4n+3$, and $4n<i<m$.  If $m\not =4n+3$, then by the exactness of the Gysin sequence,  $H^{4n+3}(X)\cong H^m(X)\cong R$   with bases $\{u_{4n+3}\}$ and $\{y\}$,  respectively; and if $m=4n+3$ then $H^{4n+3}(X)\cong R\oplus R$ with basis $\{u_{4n+3},y\}$, where $\rho_{4n+3}(u_{4n+3})=a^n$ and  $p^*_{m}(b)=y$.  Also, $H^{m+4n+3}(X)\cong R$. Let  $\{v_{m+4i+3}\}$ be basis for $H^{m+4i+3}(X)$ with $\rho_{m+4i+3}(v_{m+4i+3})=a^ib$ for all $0\leq i\leq n$.    Clearly, $H^{j}(X)=0$ for all $k\leq j\equiv k+1\text{ or }k+2 (\text{mod } 4)<k+4n$; and $k=0,m$; and  $j\not=4n+3$. Note that $u_3^2=0$.

Now we assume that $m\leq 4n$. We consider four cases $m\equiv j(\text{mod }4),0\leq j\leq 3$.\\
\textbf{m\,$\equiv$\,0(mod 4):} Let
  $m=4i_0$ for some $i_0\leq n$. In this case, $H^i(X)=0$ for $i\equiv 1 (\text{mod } 4)$ or $i\equiv 2 (\text{mod } 4) $.  Also, $\rho_{4i-1}$ and $p^*_{4i}$  are isomorphisms for all $i\geq 0$. This implies that for each $0\leq i<i_0$, we have $H^{4i}(X)\cong R$  and $H^{4i+3}(X)\cong R$ with bases $\{x^i\}$ and $\{u_{4i+3}\}$, respectively, and for $i_0\leq i\leq n$, $H^{4i}(X)\cong R\oplus R$ and $H^{4i+3}(X)\cong R\oplus R$ with bases $\{x^i,x^{i-i_0}y\}$ and $\{u_{4i+3},v_{4i+3}\}$, respectively, where $p^*_{m}(b)=y,p^*_{4}(a)=x,\rho_{4i+3}(u_{4i+3})=a^i$ and  $\rho_{4i+3}(v_{4i+3})=a^{i-i_0}b$. Also, for $1\leq i\leq i_0$, $H^{4n+4i}(X)\cong R$ and $H^{4n+4i+3}(X)\cong R$ with bases $\{x^{n+i-i_0}y\}$ and $\{v_{4n+4i+3}\}$, respectively, where $\rho_{4n+4i+3}(v_{4n+4i+3})=a^{n+i-i_0}b$. Clearly,  $u_3^2=0$. Thus, 
$$H^i(X)=\begin{cases}
	R &\text{ if }j\leq i\equiv 0\text{ or }3 (\text{mod } 4)\leq m-1+j, j=0\text{ or } 4n+4 \\
	R\oplus R &\text{ if }m\leq i\equiv 0 \text{ or }3(\text{mod } 4)\leq 4n+m+3\\
	0 &\text{ otherwise.}
\end{cases}$$
\textbf{m\,$\equiv$\,1(mod 4):} Let  $m=4{i_0}+1$ for some $i_0< n$. For  $i\not=i_0+j,1\leq j\leq n+1$, we get
  $p^*_{4i}$ are isomorphisms. This implies that   $H^{4i}(X)\cong R$ with basis $\{x^i\}$ for $0\leq i\leq i_0$, where $p^*_{4}(a)=x$. For $i_0+1\leq i\leq n$, $H^{4i}(X)\cong R\oplus R$ with basis $\{x^i,v_{4i}\}$; and for $n+1\leq i\leq i_0+n+1$, $H^{4i}(X)\cong R$ with basis $\{v_{4i}\}$, where $\rho_{4i}(v_{4i})=a^{i-i_0-1}b$. It is clear that  $H^{4i+2}(X)=0$ for all $i\geq 0$; $H^{4i+3}(X)\cong R\cong H^{m+4i}(X)$, for all $0\leq i\leq n$, with bases $\{u_{4i+3}\}$ and $\{x^iy\}$, where $\rho_{4i+3}(u_{4i+3})=a^i$ and $p^*_{m}(b)=y$; and  $H^{4i+3}(X)=H^{4i+1}(X)=0$ otherwise. Obviously, $u_3^2=0$. We have $$H^i(X)=\begin{cases}
	R &\text{ if }j\leq i\equiv 0(\text{mod }4)\leq j+m-1,j=0\text{ or }4n+4,\text{ or }\\
	&\;\;\;\;     j\leq i\equiv j(\text{mod }4)\leq j+4n,j=3\text{ or } m\\ 
	R\oplus R &\text{ if }m< i\equiv 0(\text{mod }4)\leq 4n\\
	0&\text{ otherwise.} \end{cases}$$  
\textbf{m\,$\equiv$\,2(mod 4):} Let $m=4i_0+2$ for some $i_0<n$. We get $\rho_{4i+3}$ and $p^*_{4i}$ are isomorphisms, for all $i\geq 0$. Therefore, for $0\leq i\leq n$, $H^{4i+3}(X)\cong H^{4i}(X)\cong R$ with bases $\{u_{4i+3}\}$ and $\{x^i\}$, respectively,   and for $i>n$, $H^{4i+3}(X)=H^{4i}(X)=0$, where $\rho_{4i+3}(u_{4i+3})=a^i$ and $p^*_4(a)=x$. Also, we have $H^{4i+1}(X)=0$, for $i\not=i_0+j,1\leq j\leq n+1$,  and for $i_0+1\leq i\leq i_0+n+1$, $H^{4i+1}(X)\cong H^{4i-2}(X)\cong R$ with bases $\{v_{4i+1}\}$ and $\{x^{i-i_0-1}y\}$, where $\rho_{4i+1}(v_{4i+1})=a^{i-i_0-1}b$ and $p^*_m(b)=y$. 
Consequently,
$$H^i(X)=\begin{cases}
	R &\text{ if }j\leq i\equiv j\text{ or }j+3(\text{mod }4)\leq j+4n+3, j=0\text{ or } m\\
		0&\text{ otherwise.}
\end{cases}$$
\textbf{m\,$\equiv$\,3(mod 4):} Let $m=4i_0+3$ for some $i_0< n$. Then $H^{4i}(X)=0$ for $i>n,$ and $H^{4i}(X)\cong R$ with  basis $\{x^i\},$ for $0\leq i\leq n$, where $p^*_4(a)=x$. Also,  $H^{4i+1}(X)=0$ for all $i\geq 0$.  By the exactness of the Gysin sequence, for $i_0+1\leq i\leq i_0+n+1$,  $H^{4i+2}(X)\cong R$ with basis $\{v_{4i+2}\}$,  where  $\rho_{4i+2}(v_{4i+2})=a^{i-i_0-1}b$, and  $H^{4i+2}(X)=0$,  otherwise. Now, for $0\leq i< i_0$, $H^{4i+3}(X)\cong R$ with basis $\{u_{4i+3}\}$; for $i_0\leq i\leq n$, $H^{4i+3}(X)\cong R\oplus R$ with basis $\{u_{4i+3},x^{i-i_0}y\}$;  and for $n<i\leq i+i_0$, $H^{4i+3}(X)\cong R$ with basis $\{x^{i-i_0}y\}$, where $\rho_{4i+3}(u_{4i+3})=a^i$ and $p^*_m(b)=y$. Clearly, $u_3^2=0$.  We have
$$H^i(X)=\begin{cases}
	R&\text{ if }j\leq i\equiv j(\text{mod }4)\leq j+4n, j=0\text{ or } m+3,\text{ or }\\
	& \;\;\;\;j+3\leq i\equiv 3(\text{mod }4)\leq j+m-4,j=0 \text{ or } 4n+4\\
	R\oplus R&\text{ if }m\leq i\equiv 3(\text{mod }4)\leq 4n+3\\
	0&\text{ otherwise.} 
\end{cases}$$

Consider  the Leray-Serre spectral sequence for the Borel fibration $X\stackrel{i}{\hookrightarrow}X_G\stackrel{\pi}{\rightarrow}B_G$ which converges to $H^*(X_G)$ as an algebra and 	$E_2^{k,l}= H^k(B_G;R)\otimes H^l(X;R)$ for all $k,l\geq 0$.
The possible nontrivial differentials in the Leray-Serre spectral sequence for the Borel fibration $X\stackrel{i}{\hookrightarrow}X_G\stackrel{\pi}{\rightarrow}B_G$  are $\{d_{4r}\}_{r\geq 1}$.  By the edge homomorphisms and the fact that $p^*_j=i^*\circ h^*$ for all $j\geq 0$, we get $d_r(1\otimes x)=0=d_r(1\otimes y)$ for all $r\geq 0$. Note that  $u_{4i+3}$ and $v_{m+4i+3}$ are not in image of $p^*_{4i+3}$ and $p^*_{m+4i+3}$, respectively, so their image must be nonzero under some differential for all $0\leq i\leq n$. Consequently,  $x^iu_3\not=0$ and $x^iu_3y\not=0$, for all $i$. Obviously, $x^{n+1}=y^2=0$.

For $m>4n$, it is clear that   $u_{4i+3}=\alpha_i x^iu_3$ and $v_{m+4i+3}=\beta_i x^iu_3y$ for some nonzero elements $\alpha_i,\beta_i$ in $R$ and  $0\leq i\leq n$. In particular, for $m=4n+3$, $x^nu_3$ can not be equal  to any multiple  of $y$ and so $u_{4n+3}$ is  generated by $\{y,x^nu_3\}$.

Now, suppose $m\leq 4n$. If $m=4i_0$, for some $i_0\leq n$ then $u_{4i+3}=\alpha_i x^iu_3$ for $0\leq i<i_0$ and  $v_{4i+3+m}=\beta_i x^iu_3y$  for $n+1-i_0\leq i\leq n$, where  $\alpha_i$'s  and $\beta_i$'s are nonzero elements in $R$. We observe that 
 $x^iu_3$ can not be equal to any multiple of $x^{i-i_0}u_3y$,  for all $ i_0\leq i\leq n$. Therefore,  the elements  $u_{4i+3}$ and $v_{4i+3}$ are generated by $\{ x^iu_3, x^{i-i_0}u_3y \}$ for each $i_0\leq i\leq n$.
If $m=4i_0+1 $ for some $i_0<n$ then  the elements $x^iu_3y$ cannot be equal to any multiple of $x^{4i+3+m}$ for each $0\leq i\leq n-i_0+1$. Consequently, $v_{4i+m+3}$ is generated by $\{x^iu_3y,x^{4i+m+3}\}$  and $u_{4i+3}$ is generated by $\{x^iu_3\}$ for all $0\leq i\leq n$.   If $m=4i_0+2$ for some $i_0<n$ then we have $u_{4i+3}=\alpha_i x^iu_3$ and $v_{m+4i+3}=\beta_i x^iu_3y$ for some nonzero elements $\alpha_i,\beta_i$ in $R$ and  $0\leq i\leq n$. In this case,  $u_3^2$ may be both zero or nonzero.  If  $m=4i_0+ 3$ then  $v_{m+4i+3}=\alpha_i x^iu_3y$,  where $\alpha_i\in R$, for all $0\leq i\leq n$. As $x^iu_3$ can not be equal to $x^{i-i_0}y$, for all $i_0\leq i\leq n$, therefore, $u_{4i+3}$ is generated by $\{x^iu_3,x^{i-i_0}y\}$, for all $0\leq i\leq n$. For $m\not=3$, obviously $u_3^2=0$. If $u_3^2\not=0$, for $m=3$, then 
  $d_4(1\otimes v_{6})=0$, a contradiction. 
    Therefore, $u_3^2=0$. The cohomology algebra of $X$ is given by 
$$H^*(X)=R[x,u_3,y]/\langle x^{n+1},u_3^2,y^2\rangle,$$ where $\deg x=4$, $\deg u_3=3$ and $\deg y=m$.  This realizes the  possibility (ii).

If $u_3^2\not =0$ then $m$ must be 6  and $u^2_3=\alpha y$ for some $\alpha$ nonzero in $R$. By the commutativity of cup product, we get $2u_3^2=0$. This implies that  $R$ cannot be $\mathbb{Q}$. Therefore, 
$$H^*(X)\cong R[x,u_3] /\langle x^{n+1},u_3^4 \rangle,$$ $\deg x=4$ and $\deg u_3=3$ only when $R=\Z_2$. By the properties of Steenrod squares $Sq^3(y)=y^2\not=0$. As $3$ is not a power of 2, we get $Sq^3=Sq^2\circ Sq^1+Sq^1\circ Sq^2$. This gives that $Sq^1(y)=x$. Note that $Sq^1$ is the Bockstein homomorphism  associated with the coefficient sequence $0\to \Z_2\to Z_4\to \Z_2\to 0$.  
  This realizes the  possibility  (iv) of the Theorem.

\noindent
\textbf{Case(b):} When $\cup:H^0(X/G)\to H^4(X/G)$ maps $1$ to $ca$ for some $c\not=0$ in $R$. 

First, we suppose that $m>4n$.  For $0\leq i<n$ and $j=0,m$, we get $\cup:H^{4i+j}(X/G)\to H^{4i+4+j}(X/G)$  are  isomorphisms; $\rho_{4i+3+j}$ and  $p^*_{4i+4+j}$ are trivial homomorphisms. Consequently,  $H^{4i+3+j}(X)=H^{4i+4+j}(X)=0$. Also, $H^{4i+1+j}(X)=H^{4i+2+j}(X)=0$ except for $H^{4n+3}(X)$. Note that $H^i(X)=0$ if $(4n<i<m \text{ and }i\not=4n+3)$  and $H^{m+4n+3}(X)\cong R$ with basis $\{u_{m+4n+3}\}$, where $\rho_{m+4n+3}(u_{m+4n+3})=a^nb$.
 For $m\not = 4n+3$,  $\rho_{4n+3}$ and $p^*_m$ are isomorphisms. Consequently, $H^{4n+3}(X)\cong R\cong H^m(X)$ with bases $\{u_{4n+3}\}$ and $\{y\}$, respectively, where $p^*_m(b)=y$ and $\rho_{4n+3}(u_{4n+3})=a^n$. If $m=4n+3$ then $H^{4n+3}(X)\cong R\oplus R$ with basis $\{u_{4n+3},y\}$, where $\rho_{4n+3}(u_{4n+3})=a^n$ and $p^*_m(b)=y$.  Also, $H^{m+4n+1}(X)=H^{m+4n+2}(X)=0$. For  $m\not=4n+3$, we get 
$$H^i(X)=\begin{cases}
	R&\text{ if }i=0,4n+3,m,m+4n+3\\
	0&\text{ otherwise}
\end{cases}$$  
  and for $m= 4n+3$, we get
$$H^i(X)=\begin{cases}
	R\oplus R&\text{ if }i=4n+3\\
	R&\text{ if }i=0,m+4n+3\\
	0&\text{ otherwise.}
\end{cases}$$ 
For $m\leq 4n$, we get the same cohomology groups as in $m>4n$.

Now, we compute the cohomology ring structure of $X$.
In the Leray-Serre spectral sequence for the Borel fibration $X\hookrightarrow X_G\stackrel{\pi}{\to }B_G$, $d_{4r'}(1\otimes u_{4n+3})\not=0$  for some $r'>0$ and $d_{4r}(1\otimes y)=0$ for all $r\geq 0$. Firstly, suppose that $m=4i_0$   and $r'=n-i_0+1$, where $1\leq i_0\leq n$, then 
$E_{4n-m+5}^{k,4n+3}=0,E_{4n-m+5}^{k,q}=E_2^{k,q}$ for  all $k\geq 0$ and $q=0,m+4n+3$. Also, $E_{4n-m+5}^{k,m}=E_2^{k,m}$ for all $k\leq 4n-m$ and trivial otherwise. Since $G$ acts freely on $X$, therefore, $d_{m+4n+4}(1\otimes u_{m+4n+3})=ct^{n+i_0+1}\otimes 1$ for some $c\not=0$ in $R$.   Then  $E_{\infty}^{4k,q}=R$ for all ($4k\leq m+4n,q=0$), ($4k\leq 4n-m, q=m$) and trivial otherwise.  This implies that $t\otimes 1\in E_2^{4,0}$ and $1\otimes y \in E_2^{0,m}$ are permanent cocycles. Then by edge homomorphism  there exist  $u\in E_{\infty}^{4,0}$ and  
 $w\in E_{\infty}^{0,m}$  corresponding  $t\otimes 1$ and   $1\otimes y$ respectively with $\pi^*(t)=u$. We have  $w^2=u^{n+i_0+1}=u^{n-i_0+1}w=0$. This implies that 
\begin{align*}
	\textnormal{Tot}E_{\infty}^{*,*}\cong \mathbb{R}[u,w]/\langle w^2,u^{n+i_0+1},u^{n-i_0+1}w \rangle, 
\end{align*}
	where $\deg u=4\text{ and } \deg w=m.$ Then there exist an element $v\in H^m(X_G)$ corresponding to $w\in E_{\infty}^{0,m}$ such that $i^*(v)=y$. We have  $u^{n-i_0+1}v=\alpha u^{n+1}$  and $v^2=\beta u^{2i_0}+\gamma u^{i_0}v$, where $\alpha,\beta,\gamma\in R$ and $\beta=0$  if  $m>4n$. So, the ring cohomology of $X/G$ is given by $H^*(X_G)\cong R[u,v]/\langle u^{n-i_0+1}v-\alpha u^{n+1},v^2-\beta u^{2i_0}-\gamma u^{i_0}v ,u^{n+i_0+1}\rangle$
where $\deg u=4,\deg v=m$ and $\alpha,\beta,\gamma\in R$, 
$\beta=0$  if  $m>4n$, which is a contradiction. So,  $r'$ must be $n+1$. This implies that  $d_{4n+4}(1\otimes yu_{4n+3})\not = 0$. Consequently, $u_{4n+m+3}=\alpha yu_{4n+3}$ for some $\alpha \not=0$ in $R$. Obviously, $y^2=0$, and $u_{4n+3}^2=0$ for $m\not\in\{4n+3,8n+6\}$.
If $m=4n+3$ then  $u_{4n+3}^2\not =\beta u_{4n+m+3}$ for any $\beta $ in $R$ and we get $u_{4n+3}^2=0$. If $m=8n+6$ then $u_{4n+3}^2$ may be both zero or nonzero. Thus, $$H^*(X,R)=R[y,u_{4n+3}]/\langle y^2,u_{4n+3}^2\rangle,$$ where $\deg y=m$ and $\deg u_{4n+3}=4n+3$. This realizes possibility (i).  If $u_{4n+3}^2\not=0$ then $u_{4n+3}=\alpha y $ for some nonzero element $\alpha\in R$. Thus, the cohomology algebra of $X$ is given by 
$R[u_{4n+3}]/\langle u_{4n+3}^4\rangle,$ where $\deg u_{4n+3}=4n+3$. By \cite[Theorem 4L.9]{Hatcher}, this cohomology algebra is not possible for  $R=\Z_2$ and by the commutativity of cup product this cohomology algebra is also not possible for $R=\mathbb{Q}$.

  \noindent\textbf{Case(c):} When $\cup:H^0(X/G)\to H^4(X/G)$ maps $1$ to $cb$, where $c\not=0$ in $R$.

   In this case $m$ must be 4, and we get $H^{4i}(X)\cong R$ with basis $\{x^i\}$, where $p^*_4(a)=x$. Clearly,  $H^3(X)=H^{4i+1}(X)=H^{4i+2}(X)=0$ for all $i\geq 0$. As $\ker\cup:H^{4i}(X/G)\to H^{4i+4}(X/G)$ is generated by $a^{i-1}b$, we have $H^{4i+3}(X)\cong R$ with basis $\{u_{4i+3}\}$, where $\rho_{4i+3}(u_{4i+3})=a^{i-1}b$ for all $1\leq i\leq n+1$. We must have $d_4(1\otimes u_7)=0$ and $d_8(1\otimes u_7)\not=0$. Consequently,  $u_{4i+7}=c_ix^ib_7$ for some nonzero $c_i\in R$,  for all $1\leq i\leq n$. Therefore, $H^*(X)\cong R[x,u_7]/\langle u_7^2, x^{n+1}\rangle$, where $\deg x=4$ and $\deg u_7=7$.
This realizes the possibility (iii). 

\noindent\textbf{Case(d):}  When $\cup:H^0(X/G)\to H^4(X/G)$ maps $1$ to $ca+c'b$, where $c,c'\not=0$ in $R$.

In this case also $m$ must be 4 and $H^{4}(X)\cong R$ with basis $\{x\}$, where $p^*_4(a)=x$. By the exactness of the Gysin sequence, we get $H^3(X)=H^{4i+1}(X)=H^{4i+2}(X)=0$ for all $i\geq 0$. As $\cup:H^{4i}(X/G)\to H^{4i+4}(X/G)$ is an isomorphism, we get $H^{4i+3}(X)=H^{4i+4}(X)=0$ for all $0< i<n$. Note that $H^{4n+4}(X)=0$ and $\ker\cup:H^{4n}(X/G)\to H^{4n+4}(X/G)$ is generated by $\{a^{n-1}b-\frac{c}{c'}a^n\}$. This implies that  $H^{4n+3}(X)\cong R$ with basis $\{u_{4n+3}\}$, where $\rho_{4n+3}(u_{4n+3})=a^{n-1}b-\frac{c}{c'}a^n$.   
Obviously, $H^{4n+7}(X)\cong R$ with basis $\{u_{4n+7}\}$, where $\rho_{4n+7}(u_{4n+7})=a^nb$.   
Using similar arguments as in case(b), $H^*(X)\cong R[x,u_{4n+3}]/\langle u_{4n+3}^2, x^{2}\rangle$, where $\deg x=4$ and $\deg u_{4n+3}=4n+3$. This  realizes the possibility (i). 
\end{proof}
 
Next, we discuss similar  result for circle actions with the orbit space  product of a  complex projective space and  sphere:

\begin{theorem}\label{Theorem for S^1}
	Let $G=\mathbb{S}^1$ act freely on a finitistic connected space $X$  with $X/G\sim_R\mathbb{CP}^n\times \mathbb{S}^m$,  where $R$ is $\mathbb{Q}$ or $\Z_2$.  The cohomology algebra of  $X$ with coefficients in $R$ is isomorphic to the cohomology algebra of one of the following:
	\begin{enumerate}
		\item[(i)] $\mathbb{S}^m\times\mathbb{S}^{2n+1}$; 
		\item[(ii)] $\mathbb{S}^1\times\mathbb{S}^{m}\times\mathbb{CP}^n;$
		\item[(iii)] $\mathbb{RP}^{2n+1}\times\mathbb{S}^m;$ 
		\item[(iv)] $R[x,y]/\langle x^{n+1},y^2+\alpha x^3 \rangle$, where $\deg x=2,\deg y=3,m=2$, $\alpha\in R$ and $\alpha=0$ for $R=\mathbb{Q}$.
	\end{enumerate}  
\begin{proof}
	As $X$ is connected,  $H^0(X/G)\cong H^0(X)$, and clearly, $H^i(X)=0$ for all $i>m+2n+1$.  We consider the following cases:

	\noindent\textbf{Case(a):} When $\cup:H^0(X/G)\to H^2(X/G)$ is trivial. \\
First assume that $m>2n$.	In this case, $\cup:H^i(X/G)\to H^{i+2}(X/G)$ is trivial for all $i\geq 0$.  Then $\rho_{k+2i+1}$ and $p^*_{k+2i+2}$ are isomorphisms for all $0\leq i< n$ and $k=0,m$.   This implies that  $H^{k+2i+1}(X)\cong H^{k+2i+2}(X)\cong R$. Suppose that $\{u_{2i+1}\}$ and $\{x^{i+1}\}$ denotes the bases for   $H^{2i+1}(X)$ and $H^{2i+2}(X)$,   respectively,  where $\rho_{2i+1}(u_{2i+1})=a^i$ and $p^*_2(a)=x$.  Also, $H^{m+2n+1}(X)\cong R$. Let $\{v_{m+2i+1}\}$ be basis  for $H^{m+2i+1}(X)$ with       $\rho_{m+2i+1}(v_{m+2i+1})=a^ib$  for all $0\leq i\leq n$. Note that  $H^i(X)=0$, for all   $2n+1<i<m$.  If $m\not =2n+1$, then by the exactness of Gysin sequence,  $H^{2n+1}(X)\cong H^m(X)\cong R$   with bases $\{u_{2n+1}\}$ and $\{y\}$,  respectively; and if $m=2n+1$ then $H^{2n+1}(X)\cong R\oplus R$ with basis $\{u_{2n+1},y\}$, where $\rho_{2n+1}(u_{2n+1})=a^n$ and  $p^*_m(b)=y$.

Now, suppose that $m\leq 2n$. We consider two cases when $m$ is even or odd:

\noindent \textbf{m is odd}: Let $m=2i_0+1$ for some $i_0< n$.  In this case, $\cup:H^{2i}(X/G)\to H^{2i+2}(X/G)$ is trivial for all $i\geq 0.$  Then $\rho_{2i-1}$ and  $p^*_{2i}$ are isomorphisms for  all $0\leq i\leq i_0$. This implies that   $H^{2i-1}(X)\cong  H^{2i}(X)\cong R$ with bases $\{u_{2i-1}\}$ and $\{x^i\}$, respectively, where $\rho_{2i-1}(u_{2i-1})=a^{i-1}$ and $p^*_2(a)=x$. For $i_0+1\leq i\leq n$, $H^{2i}(X)\cong H^{2i-1}(X)\cong H^{2n+1}(X)\cong   R\oplus R$ with bases $\{x^i,v_{2i}\}$,  $\{yx^{i-i_0-1},u_{2i-1}\}$ and  $\{yx^{n-i_0},u_{2n+1}\}$, respectively; and for $n<i\leq i_0+n$, we have $H^{2i}(X)\cong H^{2n+m+1}(X)\cong H^{2i+1}(X)\cong R$ with bases $\{v_{2i}\}$, $\{v_{2n+m+1}\}$ and $\{yx^{i-i_0}\}$, respectively, where $\rho_{2i}(v_{2i})=a^{i-i_0-1}b$, $\rho_{2i+1}(u_{2i+1})=a^i$ and $p^*_m(b)=y$.

	\textbf{m is even:} Let $m=2i_0$ for some $i_0\leq n$.  We get that  $\rho_{2i-1}$ and $p^*_{2i}$  are isomorphisms for all $i\geq 0$. This implies that  for each $0\leq i<i_0$, $H^{2i}(X)\cong H^{2i+1}(X)\cong  R$  with bases $\{x^i\}$ and $\{u_{2i+1}\}$, respectively,  and for $i_0\leq i\leq n$, we get $H^{2i}(X)\cong H^{2i+1}(X)\cong R\oplus R$  with bases $\{x^i,x^{i-i_0}y\}$ and $\{u_{2i+1},v_{2i+1}\}$, respectively, where $p^*_2(a)=x, p^*_m(b)=y,\rho_{2i+1}(u_{2i+1})=a^i$ and  $\rho_{2i+1}(v_{2i+1})=a^{i-i_0}b$. Also, for $1\leq i\leq i_0$, $H^{2n+2i}(X)\cong  H^{2n+2i+1}(X)\cong R$  with bases $\{x^{n+i-i_0}y\}$ and $\{v_{2n+2i+1}\}$, respectively, where $\rho_{2n+2i+1}(v_{2n+2i+1})=a^{n+i-i_0}b$.

	 Finally, for all $m\leq 2n$, we have 
		 $$H^i(X)=\begin{cases}
	 	R &\text{ if }j\leq i\leq m+j-1, j=0\text{ or }2n+2,\\
	 	R\oplus R &\text{ if }m\leq  i\leq 2n+1\\
	 	0&\text{ otherwise.}
	 \end{cases}$$

	Now, we compute the cohomology ring structure of $X$.
	 In the Leray Serre spectral sequence for the Borel fibration $X\stackrel{i}{\hookrightarrow}X_G\stackrel{\pi}{\rightarrow}B_G$, it is easy to observe that  $d_r(1\otimes x)=0=d_r(1\otimes y)$ for all $r\geq 0$, and the images of $u_{2i+1}$ and $v_{m+2i+1}$  must be nonzero  under some differential, for all $0\leq i
     \leq n$. Consequently,   $x^iu_1\not=0$ and $x^iu_1y\not=0$. Obviously, $x^{n+1}=y^2=0$.

	For $m>2n$, it is clear that  for $0\leq i\leq  n$, $u_{2i+1}=\alpha_i x^iu_1$ and $v_{m+2i+1}=\beta_i x^iu_1y$ for some nonzero elements $\alpha_i,\beta_i$ in $R$. In particular, for $m=2n+1$,  $x^nu_1$ can not be equal  to any multiple  of $y$ and so $u_{2n+1}$ is  generated by $\{y,x^nu_1\}$.

	Now, suppose $m\leq 2n$.  If  $m=2i_0+1$ for some $i_0<n$ then  $u_{2i+1}=\alpha_ix^iu_1$  for $0\leq i<i_0$ and $v_{m+2i+1}=\beta_i x^iu_1y$ for $n-i_0\leq i\leq n$, where $\alpha_i$'s and $\beta_i$'s are nonzero elements in 
	$R$. As $x^iu_1$ can not be equal to $x^{i-i_0}y$, for all $i_0\leq i\leq n$, therefore, $u_{2i+1}$ is generated by $\{x^iu_1,x^{i-i_0}y\}$, for all $i_0\leq i\leq n$. Also, $x^iu_1y$ can not be equal to any multiple of $x^{i+i_0+1}$, so $v_{m+2i+1}$ is generated by $\{x^{i+i_0+1},x^iu_1y\}$, for all $0\leq i<  n-i_0$. 
	If $m=2i_0$, for some $i_0\leq n$ then $u_{2i+1}=\alpha_i x^iu_1$ for $0\leq i<i_0$ and  $v_{2i+m+1}=\beta_i x^iu_1y$  for $n-i_0+1\leq i\leq n$, where  $\alpha_i$'s  and $\beta_i$'s are nonzero elements in $R$. We observe that 
	$x^iu_1$ can not be equal to any multiple of $x^{i-i_0}u_1y$,  for all $ i_0\leq i\leq n$. Thus, the elements  $u_{2i+1}$ and $v_{2i+1}$ are generated by $\{ x^iu_1, x^{i-i_0}u_1y \}$.
Note that $u_1^2$ may be both zero or nonzero. If $u_1^2=0$ then $X\sim_R\mathbb{S}^1\times\mathbb{CP}^n\times\mathbb{S}^m$. If $u_1^2\not=0$, then by the commutativity of cup product, $R=\Z_2$ and  $X\sim_{\Z_2} \mathbb{RP}^{2n+1}\times \mathbb{S}^m$. This realizes  possibilities (ii) and (iii) of the theorem.

	\noindent
	\textbf{Case(b):} When $\cup:H^0(X/G)\to H^2(X/G)$ maps $1$ to $ca$ for some $c\not=0$ in $R$. 
	
	First, suppose that $m>2n$.   In this case,   $\rho_{2i+1+j}$ and  $p^*_{2i+2+j}$ are trivial for all $0\leq i<n$ and $j=0$ or $m$. Consequently, $H^{2i+1+j}(X)=H^{2i+2+j}(X)=0$. Note that $H^i(X)=0$ for $2n+1<i<m$ and $H^{m+2n+1}(X)\cong R$ with basis $\{u_{m+2n+1}\}$, where $\rho_{m+2n+1}(u_{m+2n+1})=a^nb$. Thus, for $m\not = 2n+1$,  $\rho_{2n+1}$ and $p^*_m$ are isomorphisms. This implies that $H^{2n+1}(X)\cong H^m(X)\cong R$  with bases $\{u_{2n+1}\}$ and $\{y\}$ respectively. If $m=2n+1$ then $H^{2n+1}(X)\cong R\oplus R$ with basis $\{u_{2n+1},y\}$, where $\rho_{2n+1}(u_{2n+1})=a^n$ and $p^*_m(b)=y$.

	It is easy to see that, for $m\leq 2n$,  the  cohomology groups and generators are the same as above.

	 In  the Leray-Serre spectral sequence for the Borel fibration $X\hookrightarrow X_G\stackrel{\pi}{\to }B_G$, $d_{2r'}(1\otimes u_{2n+1})\not= 0$  for some $r'>0$ and $d_{2r}(1\otimes y)=0$ for all $r\geq 0$. Let if possible  $m=2i_0$   and $r'=n-i_0+1$, where $1\leq i_0\leq n$. Then  as done in previous theorem, we get $H^*(X_G)\cong R[u,v]/\langle u^{n-i_0+1}v-\alpha u^{n+1},v^2-\beta u^{2i_0}-\gamma u^{i_0}v ,u^{n+i_0+1}\rangle$
	 where $\deg u=2,\deg v=m$ and $\alpha,\beta,\gamma\in R$, 
	 $\beta=0$  if  $m>2n$, which is a contradiction.
	  Therefore,  $r'$ must be $n+1$. This implies that  $d_{2n+2}(1\otimes yu_{2n+1})\not = 0$. Consequently, $u_{m+2n+1}=\alpha yu_{2n+1}$ for some $\alpha \not=0$ in $R$.   Obviously,  $y^2=0$ and $u_{2n+1}^2=0$ for $m\not\in \{2n+1,4n+2\}$. If $m=2n+1$ then $u_{2n+1}^2\not =\alpha'u_{m+2n+1}$ for any $\alpha'$ in $R$ and so $u_{2n+1}^2=0$. If $m=4n+2$ then $u_{2n+1}^2$ may be both zero or nonzero. If $u_{2n+1}^2=0$ then
	$H^*(X)=R[y,u_{2n+1}]/\langle y^2,u_{2n+1}^2\rangle$, where $\deg y=m$ and $\deg u_{2n+1}=2n+1$.  This realizes possibility (i)  of the theorem.   
	 If $u_{2n+1}^2\not=0$ then $u_{2n+1}=\beta y$ for some nonzero element $\beta$ in $R$. So, we get $H^*(X)=R[u_{2n+1}]/\langle u_{2n+1}^4\rangle$, where  $\deg u_{2n+1}=2n+1$ and $m=4n+2$. By the commutativity of cup product this is not possible for $R=\mathbb{Q}$. Also, by \cite[Theorem 4L.9]{Hatcher}, it is not possible for $R=\Z_2$.
	
	\noindent\textbf{Case(c):} When $\cup:H^0(X/G)\to H^2(X/G)$ maps $1$ to $cb$, where $c\not=0$ in $R$.

	In this case, $m$ must be 2 and $H^1(X)=0$. 
	As $H^{2i-1}(X/G)=0$ for all $i\geq 0$, therefore, we have  $H^{2i-2}(X)\cong \im p^*_{2i-2}$ and $H^{2i+1}(X)\cong \ker \{\cup:H^{2i}(X/G)\to H^{2i+2}(X/G)\}$ with bases $\{x^i\}$ and $\{u_{2i+1}\}$, respectively, where  $\rho_{2i+1}(u_{2i+1})=a^{i-1}b$ and $p^*_2(a)=x$ for all $1\leq i\leq n+1$. 
	 In the Leray-Serre spectral sequence,  $d_{2r}(1\otimes x)=0$ for all $r\geq 0$ and $d_{2r'}(1\otimes u_3)\not=0$ for some $r'>0$. If $r'=1$ then $H^*(X_G)\cong R[u,v]/\langle u^{n+2},v^{n+1},uv\rangle$, where $\deg u=\deg v=2$, a contradiction. Therefore, $r'$ must be $2$, and we get 
	 $x^iu_3\not=0$ for all $1\leq i\leq n$. This implies that  $u_{2i+3}=\alpha_ix^iu_3$ for some nonzero $\alpha_i\in R$. We have $u_3^2=\alpha x^3$ for some $\alpha\in R$. By the commutativity of cup product, we get $2u_3^2=0$. Therefore, for $R=\mathbb{Q}$, $\alpha$ must be zero.  Therefore,  $H^*(X)\cong R[x,u_3]/\langle x^{n+1},u_3^2-\alpha x^3 \rangle$, where $\deg x=2,$ $\deg u_3=3$ and $\alpha=0$ for $R=\mathbb{Q}$. This realizes possibility (iv).

	\noindent\textbf{Case(d):}  When $\cup:H^0(X/G)\to H^2(X/G)$ maps $1$ to $ca+c'b$, where $c,c'\not=0$ in $R$.

We have $m=2$ and $\cup:H^{2i}(X/G)\to H^{2i+2}(X/G)$ is an isomorphism, for all $0< i<n$. By the exactness of Gysin sequence, $H^{2i-1}(X)=H^{2i+2}(X)=0$ for all $0< i\leq n$; and $H^2(X)\cong H^{2n+1}(X)\cong H^{2n+3}(X)\cong R$ with bases $\{x\}$, $\{u_{2n+1}\}$ and $\{u_{2n+3}\}$, respectively, where  $p^*_2(a)=x$, $\rho_{2n+1}(u_{2n+1})=a^{n-1}b-\frac{c}{c'}a^n$ and $\rho_{2n+3}(u_{2n+3})=a^nb$. Hence,  $H^*(X)\cong R[x,u_{2n+1}]/\langle u_{2n+1}^2, x^{2}\rangle$, where $\deg x=2$ and $\deg u_{2n+1}=2n+1$. This realizes possibility (i). 
\end{proof}
\end{theorem}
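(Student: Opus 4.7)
The plan is to imitate the strategy of Theorem~\ref{Thoerem S3}, replacing the sphere bundle $\mathbb{S}^3 \hookrightarrow X \to X/G$ by $\mathbb{S}^1 \hookrightarrow X \to X/G$ and the base $\mathbb{HP}^n \times \mathbb{S}^m$ by $\mathbb{CP}^n \times \mathbb{S}^m$. Write $H^*(X/G) = R[a,b]/\langle a^{n+1}, b^2\rangle$ with $\deg a = 2$ and $\deg b = m$. The Euler class $u \in H^2(X/G)$ has four possible shapes, and I would split into cases accordingly: (a) $u = 0$; (b) $u = ca$ with $c \ne 0$; (c) $u = cb$ with $c \ne 0$, which forces $m = 2$; (d) $u = ca + c'b$ with $c, c' \ne 0$, again forcing $m = 2$. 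In each case the first step is to extract the additive structure of $H^*(X)$ from the Gysin sequence. In case (a) every cup-with-$u$ map vanishes, so the sequence breaks into short exact sequences and produces generators $x = p^*(a)$, $y = p^*(b)$, odd-degree classes $u_{2i+1}$ with $\rho(u_{2i+1}) = a^i$ and $v_{m+2i+1}$ with $\rho(v_{m+2i+1}) = a^i b$; here I would stratify the bookkeeping into the subcases $m > 2n$, $m = 2i_0 \le 2n$, and $m = 2i_0 + 1 \le 2n$ to handle degree overlaps. In cases (b)--(d) the cup-with-$u$ map is an isomorphism on most of $H^*(X/G)$, so most of $H^*(X)$ vanishes and the surviving classes concentrate near degrees $0, 2n+1, m, m+2n+1$ (or near low even and odd degrees in (c)--(d)).

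The ring structure is then read off from the Leray--Serre spectral sequence of the Borel fibration $X \hookrightarrow X_G \to B_G$; here $H^*(B_G) = R[t]$ with $\deg t = 2$, and only even differentials $d_{2r}$ can be nonzero. By Proposition~\ref{5p2} together with $p^* = i^* \circ h^*$, the classes $1 \otimes x$ and $1 \otimes y$ are permanent cocycles, while each $u_{2i+1}$ and $v_{m+2i+1}$ lies outside the image of $p^*$ and therefore has to support a nonzero differential. Degree comparison along $t$-multiples then pins down $u_{2i+1} = \alpha_i x^i u_1$ and $v_{m+2i+1} = \beta_i x^i u_1 y$ up to the overlap ambiguity that appears in case (a). For cases (b) and (d), mirroring case (b) of Theorem~\ref{Thoerem S3}, I would rule out the possibility that the first nonzero differential on $1 \otimes u_{2n+1}$ is of order strictly less than $2n+2$ by showing the resulting model $H^*(X_G) \cong R[u,v]/\langle u^{n-i_0+1}v - \alpha u^{n+1}, v^2 - \beta u^{2i_0} - \gamma u^{i_0} v, u^{n+i_0+1}\rangle$ would contradict $H^*(X/G) \cong R[a,b]/\langle a^{n+1}, b^2\rangle$; this forces $d_{2n+2}(1 \otimes y u_{2n+1}) \ne 0$ and yields the relation $u_{m+2n+1} = \alpha y u_{2n+1}$.

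Finally, the delicate step in each case is resolving the remaining squared-generator ambiguities. In case (a), if $u_1^2 = 0$ we obtain possibility (ii); if $u_1^2 \ne 0$ then graded commutativity gives $2 u_1^2 = 0$, forcing $R = \Z_2$ and producing $X \sim_{\Z_2} \mathbb{RP}^{2n+1} \times \mathbb{S}^m$, i.e.\ possibility (iii). In case (b), the exceptional coincidence $m = 4n+2$ allows $u_{2n+1}^2 \ne 0$; this is ruled out by graded commutativity over $\mathbb{Q}$ and by \cite[Theorem 4L.9]{Hatcher} over $\Z_2$, so only possibility (i) survives. In case (c) the relation $u_3^2 = \alpha x^3$ yields possibility (iv), with $\alpha = 0$ forced over $\mathbb{Q}$ by commutativity; I would also have to rule out $r' = 1$ for the transgression of $1 \otimes u_3$, again via an incompatibility with $H^*(X_G) \cong H^*(X/G)$. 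Case (d) collapses to possibility (i) by the same spectral-sequence argument as case (b). I expect the main obstacle to be the careful disentangling of the overlapping-degree subcases in (a) when $m \le 2n$, where candidate lifts such as $x^i u_1$ and $x^{i - i_0} y$ share the same total degree and have to be separated before the ring structure is determined.
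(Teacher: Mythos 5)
Your proposal is correct and follows essentially the same route as the paper: the same four-way case split on the Euler class in the Gysin sequence, the same subcases $m>2n$, $m$ even, $m$ odd for the additive structure, and the same Leray--Serre arguments (permanent cocycles $1\otimes x$, $1\otimes y$, forced differentials on $u_{2i+1}$ and $v_{m+2i+1}$, the $H^*(X_G)$-model contradiction ruling out the smaller transgression, and graded commutativity plus \cite[Theorem 4L.9]{Hatcher} to settle the squared-generator ambiguities). The only cosmetic deviation is that in case (d) the paper reads the ring off the Gysin sequence directly rather than rerunning the spectral-sequence argument, which does not affect correctness.
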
 	
 Finally, we classify a finitistic space $X$ equipped with a free involution and the orbit space  a product of real projective space and sphere: 
 	\begin{theorem}\label{Theorem S^0}
 	Let $G=\mathbb{Z}_2$ act freely on a finitistic connected space $X$ with $X/G\sim_{\Z_2}  \mathbb{RP}^n\times\mathbb{S}^m $. Then  the cohomology algebra of $X$ is one of the following:
 	\begin{enumerate}
 		\item[(i)] $\S^{m}\times\mathbb{S}^n$;
 		\item[(ii)] $\Z_2[x,y]/\langle x^{n+1},y^2+\alpha x^2 \rangle$, where $\deg x=\deg y=1,m=1$ and $\alpha\in \Z_2$;
 		\item[(iii)] $\Z_2[y]/\langle y^4\rangle,$ where $ \deg y=n$,  $m=2n$ and $n=1,2$ or 4.	
 	\end{enumerate} 
 \end{theorem}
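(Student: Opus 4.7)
The plan is to run the same case analysis template used for Theorems~\ref{Thoerem S3} and~\ref{Theorem for S^1}, specialised to $d=0$. The sphere bundle is now $\Z_2 \hookrightarrow X \xrightarrow{p} X/G$, and the Gysin sequence reads
\[
\cdots \to H^{i-1}(X/G) \xrightarrow{\cup u} H^i(X/G) \xrightarrow{p^*_i} H^i(X) \xrightarrow{\rho_i} H^i(X/G) \xrightarrow{\cup u} H^{i+1}(X/G) \to \cdots
\]
with Euler class $u \in H^1(X/G;\Z_2)$. Since $H^1(X/G;\Z_2)$ is spanned by $a$ when $m\geq 2$ and by $\{a,b\}$ when $m=1$, there are at most four sub-cases to analyse: $u=0$, $u=a$, and, only when $m=1$, $u=b$ or $u=a+b$. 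Connectedness of $X$ eliminates the case $u=0$ at once, since the initial segment $0\to H^0(X/G)\to H^0(X)\to H^0(X/G)\xrightarrow{\cup u}H^1(X/G)$ would otherwise force $H^0(X)\cong\Z_2^{\,2}$.

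In each surviving case I would compute $H^*(X;\Z_2)$ additively by identifying $H^i(X)$ with an extension of $\operatorname{coker}(\cup u)$ in degree $i$ by $\ker(\cup u)$ in degree $i$, read directly from $\Z_2[a,b]/\langle a^{n+1},b^2\rangle$. For $u=a$ the kernel and cokernel are concentrated in degrees $\{n,n+m\}$ and $\{0,m\}$, so $H^i(X)=\Z_2$ precisely for $i\in\{0,n,m,n+m\}$ (with the obvious coalescences when these degrees collide); for $u=b$ with $m=1$ the Betti sequence is $1,2,2,\ldots,2,1$ of total rank $2(n+1)$; for $u=a+b$ with $m=1$ one recovers the four-class pattern of the $u=a$ case. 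The ring structure is then pinned down by the Leray--Serre spectral sequence of the Borel fibration $X\hookrightarrow X_G\to B_G=\mathbb{RP}^\infty$, with $E_2=\Z_2[t]\otimes H^*(X)$: edge homomorphisms force $d_r(1\otimes p^*(\xi))=0$, while any class outside $\im p^*$ must transgress. In the $u=a$ branch, setting $x\in H^n(X)$ with $\rho(x)=a^n$ and $y=p^*(b)$, one has $y^2=0$ and $\rho(xy)=a^nb$, so the only ambiguity is $x^2\in H^{2n}(X)$: if $m\neq 2n$ this group vanishes and we land in possibility~(i), whereas if $m=2n$ either $x^2=0$ (again possibility~(i)) or $x^2=y$ produces the truncated polynomial ring of possibility~(iii). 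In the $u=b$ branch, taking $x=p^*(a)$ and $y\in H^1(X)$ with $\rho(y)=b$, the fact that $y\notin\im p^*$ forces $d_2(1\otimes y)=t^2$; then the derivation property gives $d_2(1\otimes xy)=t^2\otimes x\neq 0$, and writing $y^2=\alpha x^2+\beta xy$ in $H^2(X)$, the requirement that $y^2$ itself be a $d_2$-cycle kills the $\beta$-coefficient and leaves possibility~(ii). The $u=a+b$ branch reduces to possibility~(i) by identical bookkeeping.

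The main obstacle is the realisability constraint in possibility~(iii): when $u=a$, $m=2n$ and $x^2=y$, the algebra $\Z_2[x]/\langle x^4\rangle$ with $\deg x=n$ and $x^3\neq 0$ must be the mod $2$ cohomology of an actual space, that is, of an $\mathbb{FP}^3$-like object. Applied to $Sq^n(x)=x^2\neq 0$, the Adem relations force $n$ to be a power of $2$, and the classical non-existence of an $\mathbb{OP}^3$-analogue (compare \cite[Theorem~4L.9]{Hatcher}) excludes $n=8$. This restricts $n$ to $\{1,2,4\}$, which together with the three preceding sub-cases exhausts the classification stated in the theorem.
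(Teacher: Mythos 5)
Your overall strategy is the paper's own (Gysin sequence for the additive structure, Borel-fibration spectral sequence for products, Steenrod squares plus \cite[Theorem 4L.9]{Hatcher} to pin down possibility (iii)), but there is a genuine gap in the $u=a$ branch. You claim that the only ambiguity is $x^2\in H^{2n}(X)$ and that ``if $m\neq 2n$ this group vanishes.'' That is false when $m=n$: there $2n=m+n$ and $H^{2n}(X)=H^{m+n}(X)\cong\Z_2$, generated by the class $u_{m+n}$ with $\rho(u_{m+n})=a^nb$. So for $m=n$ your argument leaves open the option $x^2=xy\neq 0$ (with $y^2=0$), and the resulting algebra $\Z_2[x,y]/\langle y^2,\,x^2+xy\rangle$ --- for $n=1$ this is exactly the mod $2$ cohomology of the Klein bottle --- is not isomorphic to $H^*(\mathbb{S}^n\times\mathbb{S}^n;\Z_2)$ (in the latter every degree-$n$ class squares to zero) and is not on the list, so it must be excluded, not ignored. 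The paper excludes it via the spectral sequence: in characteristic $2$ the Leibniz rule gives $d_{n+1}\bigl((1\otimes u_n)^2\bigr)=2(1\otimes u_n)(t^{n+1}\otimes 1)=0$, whereas $1\otimes u_{m+n}$ cannot survive (its image under $i^*$ would lie in $\im p^*_{m+n}=0$) and its only available nonzero differential is $d_{n+1}$, since the alternative target $t^{m+n+1}\otimes 1$ is already killed on page $n+1$; hence $u_n^2\neq u_{m+n}$, i.e. $x^2=0$. You need this (or an equivalent) argument for the collision $m=n$.

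A second point, less fatal but worth fixing: you assert $\rho(xy)=a^nb$ with no justification. It is true, but only because the Gysin/transfer sequence consists of $H^*(X/G)$-module maps (projection formula $\rho(p^*(c)\cup z)=c\cup\rho(z)$); if you invoke that explicitly, your route is actually a genuine shortcut compared with the paper, which instead proves $xy\neq 0$ by first ruling out an early differential $d_{n-m+1}(1\otimes u_n)=t^{n-m+1}\otimes y$ (showing it would force a ring structure on $H^*(X_G)\cong H^*(X/G)$ incompatible with $\Z_2[a,b]/\langle a^{n+1},b^2\rangle$) and then applying Leibniz on page $n+1$. Without the projection formula your sketch contains no substitute for that exclusion step, so state the module property and use it; the same remark applies to the $u=a+b$ branch, where the ``identical bookkeeping'' needs $xu_n\neq 0$ (e.g. $\rho(xu_n)=a(a^n+a^{n-1}b)=a^nb\neq 0$). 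Also, minor wording: a class outside $\im p^*$ need not transgress, it only has to support some nonzero differential --- this matters precisely in the situations above where two targets are a priori available. Your treatment of the $u=0$, $u=b$ cases and the restriction $n\in\{1,2,4\}$ in possibility (iii) agrees with the paper.
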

  \begin{proof}
  	Clearly, $H^0(X/G)\cong H^0(X)$ and $H^i(X)=0$ for all $i>m+n$. It is easy to see that Euler class of the 0-sphere bundle $X\to X/G$ must be nontrivial.  Now, we consider the following cases:
 	
 	\noindent
 	\textbf{Case(a):} When $\cup:H^0(X/G)\to H^1(X/G)$ maps $1$ to $a$. 
 	
 	First, we suppose that $m\geq n$.  For $m\not= n$, we get   $\cup:H^{k+i}(X/G)\to H^{k+i+1}(X/G)$ is an isomorphism for all $1\leq i<n$ and $k=0$ or $m$. This implies that $\rho_{k+i}$ and  $p^*_{k+i+1}$ are trivial homomorphism.  Consequently, $H^{k+i}(X)=0$  and $H^{n}(X)\cong H^m(X)\cong R$  with bases $\{u_{n}\}$ and $\{y\}$ respectively, where $\rho_n(u_n)=a^n$ and $p^*_m(b)=y$. Note that $H^i(X)=0$ for $n<i<m$, $H^{m+n}(X)\cong R$ with basis $\{u_{m+n}\}$, where $\rho_{m+n}(u_{m+n})=a^nb$.
 	 For $n=m$,  $H^n(X)\cong R\oplus R$ with basis $\{u_{n},y\}$,  where $\rho_n(u_n)=a^n$ and $p^*_m(b)=y$.

 	Next, suppose that $m< n$. We get $\cup:H^i(X/G)\to H^{i+1}(X/G)$ is an isomorphism for all $1<i<n+m$ and $i\not= m-1,n$. Note that  the map $\cup:H^{m-1}(X/G)\to H^{m}(X/G)$    is  injective  with  $\im\cong R$ with basis $\{a^m\}$, and  for $i=n$ it is surjective with $\ker\cong R$ with basis $\{a_n\}$, where $\rho_n(u_n)=a^n$. This implies that $H^i(X)=0$ for  $1<i<n+m$ and $i\not= m,n$, and $H^m(X)\cong H^n(X)\cong  R$ with basis $\{y\}$ and $\{u_n\}$, respectively,  where $p^*_m(b)=y$ and $\rho_n(u_n)=a^n$. Also, $H^{n+m}\cong R$ with basis $\{u_{n+m}\}$, where $\rho_{n+m}(u_{n+m})=a^nb $.

 	Now, we compute the cohomology ring structure of $X$. In  the Leray-Serre spectral sequence for the Borel fibration $X\hookrightarrow X_G\stackrel{\pi}{\to }B_G$, $d_{r'}(1\otimes u_{n})\not= 0$  for some $r'>0$ and $d_{r}(1\otimes y)=0$ for all $r\geq 0$. Let it possible for $n>m$,  $r'=n-m+1$. Then  we get $H^*(X_G)\cong R[u,v]/\langle u^{n-m+1}v-\alpha u^{n+1},v^2-\beta u^{2m}-\gamma u^{m}v ,u^{n+i_0+1}\rangle$
 	where $\deg u=1,\deg v=m$ and $\alpha,\beta,\gamma\in R$, 
 	$\beta=0$  if  $m>n$, which is a contradiction. Therefore,  $r'$ must be $n+1$. This implies that  $d_{n+1}(1\otimes yu_{n})\not = 0$. Consequently, $u_{m+n}=yu_{n}$.   Obviously,  $y^2=0$, and $u_{n}^2=0$ for $m\not\in \{n,2n\}$. If $m=n$ then $u_{n}^2\not =u_{m+n}$ and so $u_{n}^2=0$. If $m=2n$ then $u_{n}^2$ may be both zero or nonzero. If $u_n^2=0$ then  	$H^*(X)=\Z_2[y,u_{n}]/\langle y^2,u_{n}^2\rangle$, where $\deg y=m$ and $\deg u_{n}=n$. If $u_n^2\not=0$  then the cohomology algebra of $X$ is $\Z_2[u_n]/\langle u_{n}^4\rangle$, where  $\deg u_{n}=n$ and $m=2n$. By \cite[Theorem 4L.9]{Hatcher}, $n=1,2,4$ or 8.  This realizes possibility (i) and (iii).

 	\noindent\textbf{Case(b):} When $\cup:H^0(X/G)\to H^2(X/G)$ maps $1$ to $b$.

 	In this case, $m$ must be 1 and  $\im p^*_1=\Z_2$ with basis $\{p^*_1(a)\}$. Also,   $\im p^*_{i}\cong \im \rho_i\cong \Z_2$ with basis $\{p^*_i(a^{i})\}$  and $\{a^{i-1}b\},$  respectively,   for all $0<i\leq n$. Consequently, $H^{i}(X)\cong \Z_2\oplus \Z_2$ with basis $\{x^i,u_{i}\}$, where $\rho_{i}(u_{i})=a^{i-1}b$ and $p^*_1(a)=x$. Obviously, $H^{n+1}(X)\cong \Z_2$ with  basis $\{u_{n+1}\}$,  where $\rho_{n+1}(u_{n+1})=a^nb$. In the Leray-Serre spectral sequence,  $d_{r}(1\otimes x)=0$ for all $r\geq 0$ and $d_2(1\otimes u_1)\not=0$. This implies that  for $1\leq i\leq n$,
 	$x^iu_1\not=0$, and hence $u_{i+1}=x^iu_1+\alpha_i x^{i+1}$ for some $\alpha_i\in\Z_2$. As $d_2(1\otimes xu_1)\not=0$, we have $u_1^2=\alpha x^2$ for some $\alpha\in\Z_2$. Therefore, $H^*(X)\cong \Z_2[x,u_1]/\langle x^{n+1},u_1^2+\alpha x^2 \rangle$, where $\deg x=\deg u_1=1$. This realizes possibility (ii).
 	
 	\noindent\textbf{Case(c):}  When $\cup:H^0(X/G)\to H^2(X/G)$ maps $1$ to $a+b$.

 	In this case also $m$ must be 1 and $\cup:H^{i}(X/G)\to H^{i+1}(X/G)$ is an isomorphism for all $0< i\leq n$. Consequently, $H^{i}(X)=0$ for $1<i<n$. Note that $\ker p^*_1\cong\im \rho_n\cong \Z_2$ with bases $\{a+b\}$ and $\{a^n+a^{n-1}b\}$, respectively. This implies that for $n\not=1$, $H^1(X)\cong H^n(X)\cong \Z_2$ with bases $\{x\}$ and $\{u_n\}$, respectively,  where $p^*_1(a)=x$ and $\rho_n(u_n)=a^n+a^{n-1}b$. Obviously,  $H^{n+1}(X)\cong\Z_2$ with basis $\{u_{n+1}\}$, where $\rho_{n+1}(u_{n+1})=a^nb$. In particular for $n=1$, $H^1(X)\cong\Z_2\oplus\Z_2$, $H^2(X)\cong \Z_2$ with bases $\{x,u_1\}$ and $\{u_2\}$, respectively, where $p^*_1(a)=x,\rho_1(u_1)=a+b$ and $\rho_2(u_2)=ab $. We must have $u_{n+1}=xu_n$ and $u_n^2=0$. Thus, $X\sim_{\Z_2}\mathbb{S}^1\times\mathbb{S}^n$. 	 This realizes possibility (i).
 \end{proof}                          
Now, we determine covering dimension   of coincidence set $A(f)$  of continuous maps $f:X\to \mathbb{R}^k$, where $X$ is a finitistic space equipped with free involution and $X/G\sim_{\Z_2}\mathbb{RP}^n\times\mathbb{S}^m $.
\begin{theorem}
	Let $G=\Z_2$ act freely on a finitistic space $X$ with $X/G\sim_{\Z_2}\mathbb{RP}^n\times\mathbb{S}^m,$ where $ n>4,m>1  $. If $f:X\to \mathbb{R}^k$ be any continuous map, then  $cov.dim(A(f))\geq n-k$ for $k\leq n$.
\end{theorem}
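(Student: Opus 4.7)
The plan is to follow the classical Borsuk-Ulam/Munkholm scheme, feeding it the cohomological data from Theorem \ref{Theorem S^0}. I begin by introducing the auxiliary map $F:X\to\mathbb{R}^k$ defined by $F(x)=f(x)-f(\sigma x)$, where $\sigma$ generates the free $\Z_2$-action on $X$. Then $F$ is $\Z_2$-antiequivariant and $A(f)=F^{-1}(0)$. On the $\sigma$-invariant open complement $Y=X\setminus A(f)$, the normalization $\phi=F/\|F\|:Y\to \S^{k-1}$ is equivariant for the antipodal action on $\S^{k-1}$, and descends to $\bar\phi:Y/G\to\mathbb{RP}^{k-1}$.

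Next I would identify the relevant characteristic class. The classifying map $c:X/G\to B\Z_2=\mathbb{RP}^{\infty}$ of the double cover $X\to X/G$ pulls the generator of $H^1(\mathbb{RP}^{\infty};\Z_2)$ back to some $w\in H^1(X/G;\Z_2)$. Since $m>1$, Theorem \ref{Theorem S^0} gives $H^1(X/G;\Z_2)=\Z_2\langle a\rangle$, and the freeness of the action (equivalently, the nontriviality of the Euler class as noted in the proof of that theorem) forces $w=a$. Because the classifying map of the restricted cover $Y\to Y/G$ factors through $\mathbb{RP}^{k-1}$ via $\bar\phi$, and because the generator of $H^1(\mathbb{RP}^{k-1};\Z_2)$ has vanishing $k$-th power, naturality yields $j^*(a^k)=0$ in $H^k(Y/G;\Z_2)$, where $j:Y/G\hookrightarrow X/G$ is the inclusion.

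I then apply the long exact sequence of the pair $(X/G,Y/G)$ in \v{C}ech $\Z_2$-cohomology. Since $j^*(a^k)=0$, there is a class $\xi\in H^k(X/G,Y/G;\Z_2)$ lifting $a^k$; the relative cup product produces $\xi\smile a^{n-k}\in H^n(X/G,Y/G;\Z_2)$, which maps under the long exact sequence to $a^k\smile a^{n-k}=a^n$. Since $a^n\neq 0$ in $H^n(X/G;\Z_2)$ by Theorem \ref{Theorem S^0}, this forces $H^n(X/G,Y/G;\Z_2)\neq 0$.

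Finally, I would derive a contradiction from the assumption $cov.dim(A(f))<n-k$. Since $\Z_2$ acts freely on $A(f)$, $cov.dim(A(f)/G)\leq cov.dim(A(f))$, and hence $\check H^i(A(f)/G;\Z_2)=0$ for $i\geq n-k$. Combining this dimensional vanishing with the tautness of \v{C}ech cohomology for the closed pair $A(f)/G\subset X/G$, and using that the lift $\xi$ comes (via the pair $(\mathbb{RP}^{\infty},\mathbb{RP}^{k-1})$) from a class whose image in the relative cohomology reduces to one in $\check H^{n-k}(A(f)/G;\Z_2)$ after cupping with $a^{n-k}$, one obtains a contradiction with the nonvanishing of $\xi\smile a^{n-k}$. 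The main obstacle is exactly this last step: translating the covering-dimension hypothesis into the appropriate vanishing of the relative \v{C}ech cohomology, which requires careful use of tautness for closed pairs in paracompact Hausdorff spaces together with the naturality of the relative cup product through the connecting homomorphism. The hypothesis $n>4$ serves to guarantee that the powers $a^k,a^{n-k},a^n$ all sit in the ``nontrivial'' range of $H^*(X/G;\Z_2)$ so that the cup-product comparison above is genuinely informative.
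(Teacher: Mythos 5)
Your route is genuinely different from the paper's. The paper deduces from Theorem \ref{Theorem S^0} that the Volovikov index $in(X)$ equals $n$, invokes Volovikov's inequality $in(A(f))\geq in(X)-k$, and then converts this into a dimension bound through the Borel construction, Quillen's Proposition A.11, and $cov.dim\geq cohom.dim$. You instead propose the classical Munkholm-type argument run directly on the double cover, which in fact needs only the ring $H^*(X/G;\Z_2)$ given in the hypothesis (Theorem \ref{Theorem S^0} enters your write-up only to identify $w=a$; strictly speaking the nontriviality of $w$ comes from connectedness of $X$ -- implicitly assumed here, as in the paper -- not from freeness alone). Your steps up to and including the lift $\xi\in\check H^k(X/G,Y/G;\Z_2)$ of $a^k$ and the observation $\xi\smile a^{n-k}\mapsto a^n\neq 0$ are sound, and this approach, when completed, buys a self-contained proof avoiding the index machinery.

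The genuine gap is the final step, the only place where the hypothesis $cov.dim(A(f))<n-k$ would be used, and as described it does not work: the nonvanishing of $\check H^n(X/G,Y/G;\Z_2)$ by itself says nothing about $A(f)$, and there is no natural operation that ``reduces'' the degree-$n$ class $\xi\smile a^{n-k}$ to a class in $\check H^{n-k}(A(f)/G;\Z_2)$. The correct finish is the two-relative-classes argument: if $cov.dim(A(f))<n-k$, then $cov.dim(A(f)/G)<n-k$ (the orbit map is a two-fold covering of paracompact spaces), so $\check H^{n-k}(A(f)/G;\Z_2)=0$ and in particular $(a|_{A(f)/G})^{n-k}=0$; by tautness of \v{C}ech cohomology for the closed subset $A(f)/G$ of the paracompact space $X/G$ there is an open neighbourhood $V\supset A(f)/G$ with $(a|_{V})^{n-k}=0$, hence a lift $\eta\in\check H^{n-k}(X/G,V;\Z_2)$ of $a^{n-k}$; since $V\cup (Y/G)=X/G$, the relative cup product $\xi\smile\eta$ lies in $\check H^{n}(X/G,X/G;\Z_2)=0$, while its image in $\check H^n(X/G;\Z_2)$ is $a^n\neq 0$, a contradiction. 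Equivalently, this shows $(a|_{A(f)/G})^{n-k}\neq 0$, so $\check H^{n-k}(A(f)/G;\Z_2)\neq 0$ and $cov.dim(A(f))\geq cov.dim(A(f)/G)\geq n-k$. With this repair your argument is complete (and your intermediate class $\xi\smile a^{n-k}\in\check H^n(X/G,Y/G;\Z_2)$ is no longer needed); without it, the decisive implication from the dimension hypothesis to a contradiction is missing, as you yourself flag.
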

\begin{proof}
By  Theorem \ref{Theorem S^0},  the Volovikov's index   $in(X)$ \cite{Volovikov1992} is  $n$. Note that $in(A(f))\geq  in(X)-k$ \cite{Volovikov2000}, so  we have $in(A(f))\geq n-k$. As $G$ acts freely on $X$,  it induces a free action on $A(f)$. By the  definition of $in(X)$, $H^{n-k}(B_G)\to H^{n-k}(A(f)_G) $ is injective. This implies that $cohom.dim(A(f)_G)\geq n-k$, where $cohom.dim$ denotes the cohomological dimension of a space.  As $A(f)/G$ and $A(f)_G$  are homotopy equivalent,  $cohom.dim(A(f)/G)\geq n-k$. Consequently,  $cohom.dim(A(f))\geq n-k$ \cite[Proposition A.11]{Quillen }. The result follows from the fact that $cov.dim(A(f))\geq cohom.dim(A(f))$.  
\end{proof}

\section{Examples}                                 
Consider the standard  free actions  of $G=\mathbb{S}^{d}$ on $\mathbb{S}^{(d+1)n+d}$, for  $d=0,1$ or 3, and trivial action on $\mathbb{S}^m$, then under the diagonal action $G$ acts freely on  $\mathbb{S}^{(d+1)n+d}\times \mathbb{S}^m$ with the  orbit space  $\mathbb{FP}^n\times \mathbb{S}^m$, where $\mathbb{F}=\mathbb{R},\mathbb{C}$ or $\mathbb{H}$, respectively. This realizes the possibility (i) of Theorems \ref{Thoerem S3}, \ref{Theorem for S^1} and \ref{Theorem S^0}. Similarly, if we take  free actions of $G=\mathbb{S}^d$ on $\mathbb{S}^{2d+1}$  and trivial action on $\mathbb{FP}^n$ then  $G$ acts freely on  $\mathbb{FP}^n\times\mathbb{S}^{2d+1} $ with the orbit space $\mathbb{FP}^n\times\mathbb{FP}^1 $, respectively. This realizes  the possibility (iii) of Theorem \ref{Thoerem S3},  when $\alpha=0$, the possibility (iv) of Theorem \ref{Theorem for S^1} and the possibility (ii) of Theorem \ref{Theorem S^0}.  If we   take free action of  $G=\mathbb{S}^{d}$ on itself, for $d=1$ or 3, and trivial action on  $ \mathbb{FP}^n\times\mathbb{S}^m$, where $\mathbb{F}=\mathbb{C}$ or $\mathbb{H}$, respectively then $G$ acts freely on  $\mathbb{S}^{d}\times \mathbb{FP}^n\times\mathbb{S}^m$ with the orbit space  $\mathbb{FP}^n\times\mathbb{S}^m$. This realizes the possibility (ii) of Theorems  \ref{Thoerem S3} and \ref{Theorem for S^1}.

Now, consider free action of $\Z_4$ on $\mathbb{S}^3\subseteq \mathbb{C}^2$  defined by $(z_1,z_2)\mapsto (z_1 e^{2\pi i/4},z_2 e^{2\pi i/4})$. This induces a free involution   on $\mathbb{RP}^3$ with the orbit space $\text{L}^3(4,1)$. We know that $\text{L}^3(4,1)\sim_{\Z_2}\mathbb{RP}^1\times  \mathbb{S}^2 $. Recall that if  $G=\Z_2$ acts freely on a finitistic space $X$ with the mod 2 cohomology $\mathbb{CP}^3$, then the orbit spaces $X/G$ is the mod 2 cohomology  $\mathbb{RP}^2\times \mathbb{S}^4$ \cite{Hemant2008}. These examples realizes the possibility (iii) of Theorem \ref{Theorem S^0} for $n=1$ and $n=2$, respectively.
\vspace{0.35cm}

\noindent\textbf{Remark.} Let $X$ be a connected Hausdorff space equipped with free action of  $G=\Z_2$. If the orbit space $X/G=\mathbb{RP}^n\times \mathbb{S}^m$ then $X$ is homeomorphic to $\mathbb{S}^n\times \mathbb{S}^m$.  The significance of Theorem \ref{Theorem S^0}  lies with the fact that if $X/G\sim_{\Z_2}\mathbb{RP}^n\times \mathbb{S}^m$ then $X$ may have the mod 2 cohomology  isomorphic to    $\mathbb{RP}^n\times \mathbb{S}^1 $ or $\mathbb{S}^n\times\mathbb{S}^1$ for $m=1$; and $\mathbb{S}^4\times \mathbb{S}^2$ or $\mathbb{CP}^3$ for $m=4$ and $n=2$.

  \bibliographystyle{plain}

\end{document}